\title{Metric projections, zeros of optimal polynomial approximants, and some extremal problems in Hardy spaces}
\author[1]{Catherine B\'en\'eteau}
\author[2]{Raymond Cheng}
\author[1]{Christopher Felder}
\author[1]{Dmitry Khavinson}
\author[3]{Myrto Manolaki}
\author[4]{Konstantinos Maronikolakis}
\affil[1]{Department of Mathematics and Statistics, University of South Florida, \href{mailto:cbenetea@usf.edu}{cbenetea@usf.edu}, \href{mailto:felderc@usf.edu}{felderc@usf.edu}, \href{mailto:dkhavins@usf.edu}{dkhavins@usf.edu}}
\affil[2]{Department of Mathematics and Statistics, Old Dominion University, \href{mailto:rcheng@odu.edu}{rcheng@odu.edu}}
\affil[3]{School of Mathematics and Statistics, University College Dublin, \href{mailto:myrto.manolaki@ucd.ie}{myrto.manolaki@ucd.ie}}
\affil[4]{Department of Mathematics, Bilkent University, \href{mailto:conmaron@gmail.com}{conmaron@gmail.com}}
\date{\today}
\theoremstyle{plain}
\newtheorem{Theorem}{Theorem}
\numberwithin{Theorem}{section}
\numberwithin{equation}{section}
\theoremstyle{plain}
\newtheorem{Proposition}[Theorem]{Proposition}
\newtheorem{Corollary}[Theorem]{Corollary}
\newtheorem*{Corollary*}{Corollary}
\newtheorem*{Theorem*}{Theorem}
\newtheorem{Lemma}[Theorem]{Lemma}
\theoremstyle{definition}
\newtheorem{Conjecture}[Theorem]{Conjecture}
\newtheorem{Example}[Theorem]{Example}
\newtheorem{Remark}[Theorem]{Remark}
\def\phi{\varphi}
\renewcommand{\leq}{\leqslant}
\renewcommand{\geq}{\geqslant}
\newcommand{\D}{\mathbb{D}}
\newcommand{\T}{\mathbb{T}}
\newcommand{\N}{\mathbb{N}}
\newcommand{\chris}[1]{{\color{Plum}#1}}
\begin{document}

\maketitle

\begin{comment}
\begin{center}
{\Large 
Hey, pick a color.  {\color{Plum}G}{\color{Cyan}O}\ {\color{Peach}F}{\color{Emerald}R}{\color{NavyBlue}E}{\color{Mahogany}A}{\color{Salmon}K}{\color{TealBlue}I}{\color{RawSienna}N}{\color{YellowOrange}'}\ {\color{Periwinkle}N}{\color{DarkOrchid}U}{\color{BrickRed}T}{\color{Turquoise}S}{\color{Magenta}!}    }
\end{center}
\end{comment}

\begin{abstract}
The well-known proof of Beurling's Theorem in the Hardy space $H^2$, which describes all shift-invariant subspaces, rests on calculating the orthogonal projection of the unit constant function onto the  subspace in question. Extensions to other Hardy spaces $H^p$ for $0 < p < \infty$ are usually obtained by reduction to the $H^2$ case via inner-outer factorization of $H^p$ functions. In this paper, we instead explicitly calculate the metric projection of the unit constant function onto a shift-invariant subspace of the Hardy space $H^p$ when $1<p<\infty$. This problem is equivalent to finding the best approximation in $H^p$ of the conjugate of an inner function. 
In $H^2$, this approximation is always a constant, but in $H^p$, when $p\neq 2$, this approximation turns out to be zero or a non-constant outer function.
%This best approximation, which in the $H^2$ case is a constant, in $H^p$, turns out to always be zero or a non-constant outer function.
Further, we determine the exact distance between the unit constant and any shift-invariant subspace and propose some open problems. Our results use the notion of Birkhoff-James orthogonality and Pythagorean Inequalities, along with an associated dual extremal problem, which leads to some interesting inequalities. Further consequences shed light on the lattice of shift-invariant subspaces of $H^p$, as well as the behavior of the zeros of optimal polynomial approximants in $H^p$.
    %In this work, we explicitly calculate the metric projection of the unit constant function onto a shift-invariant subspace of the Hardy space $H^p$ when $1<p<\infty$. Further, we determine the exact distance between the unit constant and any shift-invariant subspace. Our results use the notion of Birkhoff-James orthogonality and Pythagorean Inequalities, along with an associated dual extremal problem. Further consequences shed light on the lattice of shift-invariant subspaces of $H^p$, as well as the behavior of optimal polynomial approximants in $H^p$.

\end{abstract}

\section{Introduction}\label{intro}
Optimal polynomial approximants (OPAs) are polynomials which heuristically approximate the reciprocal of an element in a function space. For example, let $X$ be a Banach space of analytic functions on a planar domain, for which polynomials are dense, that remains invariant under multiplication by polynomials. Also, let $\mathcal{P}_n$ be the set of all polynomials of degree at most $n$. Given $f \in X \setminus\{0\}$ and $n \in \N = \{0, 1, 2, \ldots\}$, we say that a polynomial $p_n$ in $\mathcal{P}_n$ is an $n$-th \textit{optimal polynomial approximant} to $1/f$ if it minimizes $\|1 - p f \|_X$ among all polynomials $p$ in $\mathcal{P}_n$. Note that OPAs always exist (since $f\cdot\mathcal{P}_n$ is a finite dimensional subspace of $X$), but they are not necessarily uniquely determined.

%For example, suppose $X$ is a Banach space of analytic functions defined on a planar domain, for which polynomials are dense and multiplication by the independent variable is bounded. For example, suppose $X$ is a Banach space of analytic functions defined on a planar domain, for which polynomials are dense and multiplication by the independent variable is bounded. Given $f \in X \setminus\{0\}$ and $n \in \N = \{0, 1, 2, \ldots\}$ we define an $n$-th \textit{optimal polynomial approximant} to $1/f$ as a polynomial which solves the minimization problem 
%\[
%\inf_{q \in \mathcal{P}_n} \|1 - q f \|_X,
%\]
%where $\mathcal{P}_n$ is the set of all polynomials of degree at most $n$. 

Under the name \textit{least squares inverses}, Robinson \cite{Ro} introduced these approximants, in a restricted context, as a way to address various problems in signal processing. Chui \cite{Chu1}, Chui and Chan \cite{ChuCha}, and Izumino \cite{Iz} followed the work of Robinson with related results. Fifty years later, starting with \cite{BCLSS1}, a renewed interest in the subject arose due to significant and interesting connections to function theory, reproducing kernels, and orthogonal polynomials (see, e.g., the surveys \cite{BC} and \cite{Sec}). OPAs were further investigated in Dirichlet-type spaces \cite{BFKSS,BKLSS}, in
more general reproducing kernel Hilbert spaces  
\cite{FMS}, in 
the spaces $\ell^p_A$ \cite{CRSX,ST}, in 
$L^p$ of the unit circle  \cite{Cent}, in 
Hilbert spaces of analytic 
functions on the bidisk  
\cite{BCLSS2,BKKLSS} and on the unit ball
\cite{SS2,SS1}, and in the context of free functions \cite{AAJS}. Recently, a long-standing conjecture for OPAs in several variables was disproved in \cite{BKS_Sh}. See also \cite{AS, BIMS, BMS, Felder, Felder2} for related work.

In this paper, we will focus on the Hardy spaces $H^p$, which for $ 0 < p < \infty$ can be defined as
\[
H^p:= \left\{ f \in \operatorname{Hol}(\D) :  \|f\|_p^p := \sup_{0 < r < 1} \frac{1}{2\pi} \int_0^{2 \pi} |f(r e^{i \theta})|^p  \, d \theta < \infty \right\},
\]
where $\operatorname{Hol}(\D)$ is the set of analytic functions on the open unit disc $\D$.
In the case $p = \infty$, we define
\[
H^{\infty}:= \left\{ f\in\operatorname{Hol}(\D) : \|f\|_{\infty} := \sup_{z \in \D} \left|f(z)\right| < \infty \right\}.
\]
For $ 1 \leq p \leq \infty,$ it is well known that $H^p$ is a Banach space and $H^2$ is a Hilbert space. Functions in $H^p$ have non-tangential limits almost everywhere on the unit circle $\T,$ and $H^p$ functions restricted to their boundary values on $\T$ can be viewed as a closed subspace of $L^p:=L^p(\T, dm)$, where $dm$ is normalized Lebesgue measure on $\T$. Indeed, $H^p$ can equivalently be defined as the set of $L^p$ functions with Fourier coefficients vanishing for negative frequencies.  For more on $H^p$ spaces see, e.g., \cite{Duren} or \cite{MR0133008}. 

This work was initially motivated by the question of whether OPAs in $H^p$ ($p \neq 2$) can have zeros in the closed unit disk, which, at the time of writing this manuscript, is still an open problem. 
This question arose from an interesting fact in $H^2$: if $f \in H^2$ is such that $f(0) \neq 0$, then the OPAs to $1/f$ can never vanish in the closed unit disk \cite{BKLSS_Lon,Chu1}. 

Note for $f \in H^p$, if $f(0) = 0$, then by subharmonicity, all OPAs in $H^p$ ($1 \leq p \leq \infty$) vanish identically. It is also worth noting that for $p = 1$ and $p = \infty,$ OPAs are not uniquely defined, unlike the case of $1<p<\infty$.  Therefore, for the rest of this work, we will consider only $ 1 < p < \infty$ and functions $f \in H^p$ such that $f(0) \neq 0$.

OPAs are intimately linked with the forward shift $S$,  given by $(Sf)(z) = zf(z)$;  the projection of the unit constant function onto the subspace $\operatorname{span}\{S^kf: k = 0, 1, \ldots, n \}$ is $f$ multiplied by the OPA of degree up to $n$. 
When $p=2$, this projection is an orthogonal projection and the Hilbert space structure makes computation of OPAs straightforward with linear algebra techniques.  However, when $p\neq 2$, the projection is a metric projection, which is nonlinear, and makes explicit computation of OPAs much more difficult. In this paper we find a strikingly simple expression for the \textit{limit} of OPAs in $H^p$ without calculating the OPAs explicitly. 
We will often use $z$ interchangeably with $S$, and say a subspace $M$ is $z$-invariant if $SM \subseteq M$. 
If $f\in H^p$ has corresponding OPAs $(p_n)_{n\ge0}$, then $\lim_{n \to \infty} p_nf$ can be seen as the projection of 1 onto the $z$-invariant subspace
\[
[f]_p := \overline{\operatorname{span}\{S^kf: k = 0, 1, 2, \ldots \}}^{H^p}.
\]
It is critical for us to recall the celebrated theorem of Beurling which describes every closed nontrivial $z$-invariant subspace $M$ of $H^2$ as $M = J \cdot H^2$, for some inner function $J$ (a function in $H^\infty$ is called inner if it has boundary values of unit modulus almost everywhere). Note that Beurling's Theorem holds for all $0 < p < \infty$ (see, e.g., \cite[p.\ 98]{MR628971}) and for weak-$*$ closed subspaces of $H^{\infty}$ (see, e.g.,  \cite[Theorem 7.5, Chapter 2]{MR628971}). A significant focus of this work is placed on studying projections of 1 onto these subspaces. 

As we shall see, these projections naturally lead to approximating (in $L^p$) conjugates of inner functions by $H^p$ functions.
Given an $H^p$ function, we determine the distance between $1$ and the $z$-invariant subspace of $H^p$ generated by that function. In $H^2$, that distance is measured via the orthogonal projection of $1$ onto the invariant subspace.  Indeed, that orthogonal projection gives a scalar multiple of the generating inner function.  This approach gives rise to a proof of Beurling's Theorem for $H^2$ (see, e.g., \cite[p.~100]{MR0133008}). Again, we note for $p \neq 2$, $H^p$ is not a Hilbert space but rather a Banach space and in this setting we no longer have orthogonal projections, but rather \textit{metric} projections. Although these projections are non-linear,  we can still discuss orthogonality in the Birkhoff-James sense. Our immediate aim is to explore several questions, including: 
\begin{itemize}
\item[-] What is the metric projection of $1$ onto a $z$-invariant subspace in $H^p$?
\item[-] Is the projection of $1$ onto a $z$-invariant subspace an inner function?
\item[-] If the projection of $1$ onto a $z$-invariant subspace is not an inner function, does the projection have an inner factor which is not shared with the generating function for the subspace? 
\item[-] Given a $z$-invariant subspace, what is the distance between $1$ and that invariant subspace? 
\end{itemize}    

When considering metric projections, the use of duality is a critical tool for the related extremal problem (see, e.g.,  \cite{Duren,SYaK}).  In our context, the statement of duality is as follows.  Let $1 < p < \infty,$  $\psi \in L^p$, and $q = \frac{p}{p-1}$.  Then the distance between $\psi$ and $H^p$ can be expressed as the norm of the linear functional given by $\psi$ on the annihilator of $H^p$ inside $L^q,$  namely, 
\begin{equation}\label{duality}
\sup_{\|f\|_{q} \leq 1} \left| \frac{1}{2 \pi i} \int_{\T} f(\zeta) \psi(\zeta)   \, d \zeta \right| = \inf_{\varphi \in H^p} \| \psi - \varphi \|_{L^p}.
\end{equation}

The paper is organized as follows:
\begin{itemize}
\item[-]In Section \ref{BJ}, we discuss the background needed to pass from the Hilbert setting to the more general Banach setting. In particular, we introduce Birkhoff-James orthogonality and state some useful inequalities, known as Pythagorean inequalities, which will be used in the subsequent sections.
\item[-]In Section \ref{MetP}, we answer the questions posed above by finding an explicit formula for the metric projection of $1$ onto any $z$-invariant subspace of $H^p$ and calculating the associated distance. This leads to several corollaries that enrich our understanding of the lattice of $z$-invariant subspaces of $H^p$.
\item[-]In Section \ref{opas}, we study the zeros of OPAs in $H^p$ and outline a possible path to prove that those zeros always lie outside $\D$ for any $p \neq 2$.  We also consider planar disks, centered at the origin, in which OPAs cannot vanish. We establish several estimates for the radius of such a disk, depending only on $p$ and $f$, for which OPAs are zero-free. 
\item[-]Finally, in Section \ref{concl}, we conclude with some comments and open questions. 
\end{itemize}

\section{Preliminaries: Birkhoff-James Orthogonality and Pythagorean Inequalities\color{black}}\label{BJ}

Recall that in $H^2$, if $M$ is a closed nontrivial $z$-invariant subspace of $H^2$, then the orthogonal projection of $1$ onto $M$ gives a scalar multiple of the inner function that is the generator for $M$. We seek to understand the properties of the analagous metric projection when $1 < p < \infty$. 
%For $ 1 < p < \infty,$ if $M$ is a $z-$invariant subspace of $H^p,$ will the metric projection of $1$ onto $M$ give rise to a generator of $M$? 
Recall that the metric projection of $1$ onto a $z$-invariant subspace $M \subseteq H^p$ is the unique function $g_p^*$ minimizing $\| 1 - g\|_p$ over all $g \in M$. We are particularly interested in obtaining an explicit formula for this projection and determining whether or not it is a generator of $M$. Since by Beurling's Theorem we know that $M = [J]_p$ for some inner function $J$, we are equivalently asking if $g_p^* = J \cdot G$ where $G$ is an outer function. To examine this question, it will be useful to have some background on metric projections.

Let $\mathbf{x}$ and $\mathbf{y}$ be vectors belonging to a normed linear space $\mathscr{X}$.  We say that $\mathbf{x}$ is \textit{orthogonal} to $\mathbf{y}$ in the Birkhoff-James sense   if
 \[    \|  \mathbf{x} + \beta \mathbf{y} \|_{\mathscr{X}} \geq \|\mathbf{x}\|_{\mathscr{X}}  \]
for all scalars $\beta$ \cite{Bir,Jam}. 
In this situation we write $\mathbf{x} \perp_{\mathscr{X}} \mathbf{y}$.
This way of generalizing orthogonality is particularly useful in our context since it is based on an extremal condition.

If $\mathscr{X}$ is an inner product space, then the relation $ \perp_{\mathscr{X}} $ coincides with the usual orthogonality relation.  In more general spaces, however, the relation $\perp_{\mathscr{X}}$ is neither symmetric nor linear.  In the case $\mathscr{X} = L^p(\mu)$, for a measure $\mu$,  let us write $\perp_p$ instead of  $\perp_{L^p}$.
When $1<p<\infty$, Birkhoff-James orthogonality in $L^p$ can be expressed as in integral condition, which will be useful later. 
%The following criterion for  $\perp_p$ when $1<p<\infty$ can be stated as an equivalent integral orthogonality condition. 

\begin{Theorem}[James (1947)\ \cite{Jam}]\label{praccritperp}
Suppose that $1<p<\infty$.  If $f$ and $g$ are elements of $L^p$, then 
\begin{equation}\label{BJp}
 {f} \perp_{p} {g}  \iff   \int |f|^{p - 2} \overline{f} g \,d\mu  = 0,
\end{equation}
where any occurrence of ``$|0|^{p - 2} \overline{0}$'' in the integrand is interpreted as zero.
\end{Theorem}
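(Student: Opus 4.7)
The plan is to recast the condition $f\perp_p g$ as saying that the convex function $\Phi(\beta):=\|f+\beta g\|_p^p$ on $\mathbb{C}$ attains its minimum at $\beta=0$, and then to turn that minimization statement into a vanishing first-order condition. The equivalence with the stated integral identity should then fall out by computing a directional derivative of $\Phi$ at $0$.

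The key pointwise inequality I would establish first is
\[
|a+b|^p \;\geq\; |a|^p + p\,|a|^{p-2}\,\operatorname{Re}(\overline{a}\,b),\qquad a,b\in\mathbb{C},\ p>1,
\]
with the convention ``$|0|^{p-2}\overline{0}=0$''. This is just the supporting-hyperplane inequality for the convex function $z\mapsto|z|^p$ on $\mathbb{C}\cong\mathbb{R}^2$, whose gradient at $a\neq 0$ evaluates on $b$ to $p|a|^{p-2}\operatorname{Re}(\overline{a}b)$; the degenerate case $a=0$ reduces to $|b|^p\geq 0$. The $(\Leftarrow)$ direction is then immediate: substituting $a=f(x)$, $b=\beta g(x)$, integrating, and invoking linearity gives $\|f+\beta g\|_p^p \geq \|f\|_p^p + p\operatorname{Re}\bigl(\beta\!\int|f|^{p-2}\overline{f}g\,d\mu\bigr)$, and if that integral is zero, we read off $f\perp_p g$ at once.

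For the $(\Rightarrow)$ direction, I would compute the directional derivative of $\Phi$ along an arbitrary direction $\beta_0\in\mathbb{C}$. The pointwise identity
\[
\left.\frac{d}{dt}\right|_{t=0}|f+t\beta_0 g|^p \;=\; p\,|f|^{p-2}\,\operatorname{Re}(\overline{f}\,\beta_0 g)
\]
holds almost everywhere with the stated convention, using that for $p>1$ the real function $t\mapsto|t\beta_0 g|^p$ is differentiable at $0$ with derivative $0$. To pass $d/dt$ inside the integral I would dominate the difference quotient (rather than the derivative itself) using the mean value theorem:
\[
\frac{\bigl|\,|f+t\beta_0 g|^p-|f|^p\,\bigr|}{|t|} \;\leq\; p\,|\beta_0 g|\,\sup_{|s|\leq|t|}|f+s\beta_0 g|^{p-1},
\]
and then bound the right-hand side by an $L^1(\mu)$ function independent of $t$ in a neighborhood of $0$ via Young's inequality with exponents $p$ and $p/(p-1)$. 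Dominated convergence then yields
\[
\left.\frac{d}{dt}\right|_{t=0}\Phi(t\beta_0) \;=\; p\,\operatorname{Re}\!\left(\beta_0\int |f|^{p-2}\overline{f}\,g\,d\mu\right).
\]
Since $\Phi$ is convex and attains its minimum at $0$, this quantity must vanish for every $\beta_0\in\mathbb{C}$, and choosing $\beta_0=1$ and $\beta_0=i$ forces $\int|f|^{p-2}\overline{f}g\,d\mu=0$.

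The main technical obstacle is the regime $1<p<2$, where $|f|^{p-2}$ is unbounded on the zero set of $f$, so one cannot simply differentiate the integrand and dominate its derivative. This is why I would dominate the difference quotient directly, where the singular factor $|f|^{p-1}$ (bounded via Young's inequality by $\tfrac{1}{q}|f|^p+\tfrac{1}{p}|g|^p$ after absorbing $|g|$) replaces the problematic $|f|^{p-2}$. Everything else — the convexity of $\Phi$, the equivalence between minimization at $0$ and vanishing of all one-sided directional derivatives, and extracting the complex identity from its real part — is routine.
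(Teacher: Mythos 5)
Your argument is correct: the supporting-hyperplane inequality for $z\mapsto|z|^p$ gives the $(\Leftarrow)$ direction, and dominating the difference quotient (rather than the derivative) via the mean value theorem and Young's inequality legitimately handles the singular factor $|f|^{p-2}$ when $1<p<2$, yielding the first-order condition for the $(\Rightarrow)$ direction. The paper itself does not prove this statement --- it quotes it from James (1947) and points to a short proof in the literature --- and that standard short proof is exactly the Gateaux-derivative computation you carry out, so there is nothing to compare beyond noting that your write-up is a complete, self-contained version of the cited argument.
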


For a short proof of this criterion for orthogonality in $L^p$, see \cite[Theorem A, p.\ 124]{DuSchu}. For an extension to more general normed spaces, see \cite{Bea}.
In light of this integral orthogonality condition, we define, for a measurable function $f$ and any $s > 0$, the notation
\begin{equation}\label{s-p}
f^{\langle s \rangle} := |f|^{s-1}\overline{f}.
\end{equation}
Note that if $f \in L^p$ for $1 < p < \infty,$ then $f^{\langle p-1 \rangle} \in L^q,$ where $p$ and $q$ are conjugate exponents, and then for $g \in L^p,$
\[
f \perp_{p} g \iff \langle g, f^{\langle p - 1\rangle} \rangle = 0,
\]
where $\langle \cdot \ , \cdot \rangle$ is the standard dual pairing between $L^p$ and its dual.
Consequently, the relation $\perp_{p}$ is linear in its second argument, and it then makes sense to speak of a vector being orthogonal to a subspace of $L^p$.  In particular, if $f \perp_p g$ for all $g$ belonging to a subspace $\mathscr{M}$ of $L^p$, then
\[
       \| f + g \|_{L^p}  \geq  \|f\|_{L^p}
\]
for all $g \in \mathscr{M},$ and thus  $\operatorname{dist}_{L^p}(f, \mathscr{M}) = \|f\|_{L^p}$.  In other words, the best approximation to $f$ in $\mathscr{M}$ is $0$, or, using another terminology, $f$ is badly approximable by $\mathscr{M}$. In our case, we will be interested in $f = 1 - g^*,$ where $g^*$ is the metric projection of $1$ onto $\mathscr{M}$, and therefore, to identify $g^*$, we are looking for $g^* \in \mathscr{M}$
such that $(1-g^*) \perp_p g$ for every $g \in \mathscr{M}$.

There is a version of the Pythagorean Theorem for $L^p$, where orthogonality is in the Birkhoff-James sense.  It takes the form of a family of inequalities relating the lengths of orthogonal vectors with that of their sum.

\begin{Theorem}\label{pythagthm}
Suppose $f \perp_p g$ in $L^p$.
If $p \in (1, 2]$, then
\begin{align}
   \| f + g \|^p_{L^p} & \leq  \|f\|^p_{L^p} + \frac{1}{2^{p-1}-1}\|g\|^p_{L^p} \label{upper1}\\
    \| f + g \|^2_{L^p} & \geq  \|f\|^2_{L^p} + (p-1)\|g\|^2_{L^p}.\label{lower1}
\end{align}
If $p \in [2, \infty)$, then
\begin{align}
   \| f + g \|^p_{L^p} & \geq  \|f\|^p_{L^p} + \frac{1}{2^{p-1}-1}\|g\|^p_{L^p} \label{lower2}\\
    \| f + g \|^2_{L^p} & \leq  \|f\|^2_{L^p} + (p-1)\|g\|^2_{L^p}.\label{upper2}
\end{align}
\end{Theorem}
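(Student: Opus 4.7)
The plan is to reduce each of the four $L^p$ Pythagorean inequalities to sharp pointwise (scalar) estimates on $\C$, then integrate and apply the integral criterion for Birkhoff--James orthogonality from Theorem~\ref{praccritperp} to cancel a cross-term. For the $p$-th power bounds \eqref{upper1} and \eqref{lower2}, the target scalar inequality takes the form
\[
|a+b|^p \;\;\star\;\; |a|^p + p\,\operatorname{Re}\!\bigl(a^{\langle p-1\rangle} b\bigr) + \frac{1}{2^{p-1}-1}\,|b|^p,\qquad a,b \in \C,
\]
where $\star$ denotes ``$\leq$'' for $p \in (1,2]$ and ``$\geq$'' for $p \in [2,\infty)$. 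Integrating against $d\mu$ produces the analogous inequality with $L^p$-norms, and the middle integral $p\int|f|^{p-2}\bar f\, g\,d\mu$ vanishes precisely because $f \perp_p g$, by \eqref{BJp}; this delivers \eqref{upper1} and \eqref{lower2} in one stroke.

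To verify the scalar inequality, I would first dispose of the trivial case $a=0$, then use the homogeneity $|a+b|^p = |a|^p|1+b/a|^p$ together with a unimodular rotation of $b$ to reduce to the one-complex-variable statement
\[
|1+z|^p \;\;\star\;\; 1 + p\,\operatorname{Re}(z) + C_p|z|^p,\qquad z\in\C,\quad C_p = \tfrac{1}{2^{p-1}-1}.
\]
Writing $z = re^{i\theta}$ and treating $h(r) := (1 + 2r\cos\theta + r^2)^{p/2} - 1 - pr\cos\theta - C_p r^p$ as a function of $r\ge 0$ for each fixed $\theta$, one observes that $h(0) = 0$ and (for $p>1$) $h'(0) = 0$, so sign-definiteness is a second-order phenomenon; the sharp value of $C_p$ is pinned down by identifying the extremal ray in $r$ for the worst choice of $\theta$, and the inequality is then verified globally by a one-variable calculus argument carried out separately in the two regimes of $p$.

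For the squared-norm inequalities \eqref{lower1} and \eqref{upper2}, a pointwise-then-integrate reduction is unavailable because $\|\cdot\|_{L^p}^2 = \bigl(\int|\cdot|^p\,d\mu\bigr)^{2/p}$ is not itself an integral. My approach here is variational: analyze the scalar function $F(t) := \|f + tg\|_{L^p}^2$ via a second-order Taylor expansion at $t=0$. The chain rule gives $F'(0) = 2\|f\|_{L^p}^{2-p}\,\operatorname{Re}\!\int|f|^{p-2}\bar f\, g\,d\mu$, which is zero by \eqref{BJp}, while a uniform estimate $F''(t)\,\tilde\star\,2(p-1)\|g\|_{L^p}^2$ on $[0,1]$ (with the direction $\tilde\star$ reversed relative to the $p$-power case) follows from H\"older's inequality applied to the derivatives of $|f+tg|^p$. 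Plugging these into the Taylor remainder $F(1) = F(0) + \int_0^1(1-t)F''(t)\,dt$ yields the desired chord estimate. Alternatively, this is a manifestation of the sharp $2$-uniform convexity (respectively, smoothness) of $L^p$, and can be extracted from Clarkson's or Hanner's inequalities combined with orthogonality.

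The main technical obstacle is verifying the sharp constant $1/(2^{p-1}-1)$ in the pointwise scalar inequality: the extremal phase/radius configuration is subtle and the inequality only becomes sign-definite at second order, so pinning down the correct $C_p$ requires a careful calculus argument in two real variables after the homogeneity/rotation reduction. Once the scalar inequality is in hand, the integration, cancellation of the cross-term via Birkhoff--James orthogonality, and the derivative bounds for $F$ in the squared-norm case are standard.
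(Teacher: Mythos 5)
First, a point of reference: the paper does not prove Theorem \ref{pythagthm} at all --- it quotes the inequalities from Bynum, Bynum--Drew and Cheng--Miamee--Pourahmadi and points to \cite[Corollary 3.4]{CR}, where they are derived from weak parallelogram laws. Your plan is therefore necessarily a different route, namely the ``direct'' one: a pointwise scalar inequality integrated against $d\mu$, with the cross term killed by \eqref{BJp}, for the $p$-power bounds; and a second-order analysis of $F(t)=\|f+tg\|_{L^p}^2$ for the squared bounds. This route is viable and self-contained, whereas the weak-parallelogram route produces the constants $1/(2^{p-1}-1)$ and $p-1$ from a single family of inequalities without any hand calculus. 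But as written your sketch has two genuine gaps. The first is that $1/(2^{p-1}-1)$ is \emph{not} the sharp constant in the pointwise inequality, so the step ``pin down $C_p$ by identifying the extremal ray'' will not terminate at the stated constant. For example, for $p=4$ and $z=re^{i\theta}$ one computes
\begin{equation*}
|1+z|^4 - 1 - 4\operatorname{Re}(z) \;=\; r^2\bigl((r+2\cos\theta)^2+2\bigr),
\end{equation*}
and the infimum of this quantity divided by $|z|^4$ is $1/3$ (attained at $z=-3$), not $1/(2^{3}-1)=1/7$; similarly for $1<p<2$ the pointwise supremum is strictly smaller than $1/(2^{p-1}-1)$. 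So the extremal-ray analysis yields a different (better) constant, and the actual content of the lemma --- that this pointwise-optimal constant is dominated by (for $p\le 2$), respectively dominates (for $p\ge 2$), the value $1/(2^{p-1}-1)$ for \emph{every} $p$ --- is exactly what your sketch never establishes. The inequality with $1/(2^{p-1}-1)$ does appear to be true, but the entire burden of the proof rests on it.

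The second gap concerns the regime $1<p<2$ in the squared-norm case. Writing $N(t)=\int|f+tg|^p\,d\mu$, the desired bound $F''(t)\ge 2(p-1)\|g\|_{L^p}^2$ requires $\int|f+tg|^{p-2}|g|^2\,d\mu \ge N(t)^{(p-2)/p}\|g\|_{L^p}^2$, and the relevant exponents $p/2$ and $p/(p-2)$ lie in $(0,1)$ and $(-\infty,0)$ respectively; this is the \emph{reverse} H\"older inequality, not H\"older's inequality as you assert. Moreover the integrands $|f+tg|^{p-2}|g|^2$ and $|f+tg|^{p-4}\bigl(\operatorname{Re}(\overline{(f+tg)}\,g)\bigr)^2$ are singular on the zero set of $f+tg$ when $p<2$, so the existence of $F''$ and the validity of the Taylor formula with integral remainder need separate justification there (this is precisely why the literature typically obtains \eqref{lower1} by duality from the $q\ge 2$ case, or from a two-point inequality plus reverse Minkowski). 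For $p\ge 2$ your computation is sound: $N''\le p(p-1)\int|f+tg|^{p-2}|g|^2\,d\mu\le p(p-1)N^{(p-2)/p}\|g\|_{L^p}^2$ by H\"older, the correction term $\tfrac{2}{p}(\tfrac{2}{p}-1)N^{2/p-2}(N')^2$ is nonpositive, and $F'(0)=0$ by \eqref{BJp}, which does give \eqref{upper2}.
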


These inequalities originate from \cite{Byn,BD,CMP1}; see \cite[Corollary 3.4]{CR} for a unified treatment with broader classes of spaces.

 It will be expedient to refer to \eqref{upper1} and \eqref{upper2} as the upper Pythagorean inequalities, and \eqref{lower1} and \eqref{lower2} as the lower Pythagorean inequalities, so that the two cases depending on $p$ can be handled together.  The specific values of the positive multiplicative constants, i.e., $p-1$ and $1/(2^{p-1}-1)$, are generally unimportant, and thus they will usually be denoted simply by $K$.

\section{Metric projections of $1$ onto invariant subspaces}\label{MetP}

\noindent For $f \in H^p$, recall that $[f]_p$ denotes the closure of the polynomial multiples of $f$ in $H^p$. Colloquially, we say $[f]_p$ is the closed $z$-invariant subspace generated by $f$.  
The present section deals with solving the minimization problem
\[
\inf_{g \in M} \| 1 - g\|_p,
\]
where $M$ is a $z$-invariant subspace. 
We will use several standard facts concerning $H^p$ functions; e.g., if $J\in H^p$ is inner, then by Beurling's Theorem, $[J]_p = J \cdot H^p$, a function $f \in H^p$ is outer if and only if $[f]_p = H^p$, and a polynomial is outer if and only if it does not vanish in the open unit disk. See \cite{Duren} for more properties of $H^p$ functions. 

\subsection{Extremal functions and distances}
We first consider the case where the $z$-invariant  subspace $M$ is generated by a finite Blaschke product. 

\begin{Proposition}\label{Npoint}
       Let $1<p<\infty$. If  $J$ is the finite Blaschke product 
       \[
             J(z) = \prod_{k=1}^{N}\frac{ z - a_k }{ 1 - \bar{a}_k z }
       \]
       with zeros $a_1, a_2,\ldots, a_N\in \mathbb{D}\setminus\{0\}$,   then
       \[
            \inf_{h\in H^p}\|1 - Jh\|_p
       \] 
       is attained when $h = h^*$, where  for $ N> 1$,
    \begin{equation}\label{jstarform}
          1 - J(z)h^*(z)  =  (1 - |a_1 a_2 \cdots a_N|^2)^{2/p}  \Bigg[ \frac{\prod_{k=1}^{d}(1 - \bar{w}_k z)}{\prod_{k=1}^{N}(1 - \bar{a}_k z)}\Bigg]^{2/p},\ z \in \mathbb{D}
       \end{equation}
       and the parameters $w_1, w_2,\ldots w_d \in \overline{\mathbb{D}}\setminus \{0\}$ satisfy
       \begin{equation}\label{consconds}
            1 = (1 - |a_1 a_2 \cdots a_N|^2)\frac{\prod_{k=1}^{d}(1 - \bar{w}_k a_j)}{\prod_{k=1}^{N}(1 - \bar{a}_k a_j)}
       \end{equation}
       for all $j$, $1\leq j \leq N$, where $1 \leq d \leq N-1$. 
       For $N = 1$, \begin{equation}\label{jstarform1}
          1 - J(z)h^*(z)  =  \Bigg[ \frac{1 - |a_1|^2}{1 - \bar{a}_1 z}\Bigg]^{2/p},\ z \in \mathbb{D}.
       \end{equation}         
       Moreover, $\operatorname{dist}_{H^p} (1, [J]_p) = \left(1 - |J(0)|^2 \right)^{1/p}$.
       
\end{Proposition}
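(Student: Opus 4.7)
The strategy is to use Birkhoff--James orthogonality to characterize the minimizer, then verify that the function displayed in \eqref{jstarform} satisfies the resulting conditions. The minimizer $h^*\in H^p$ of $\|1-Jh\|_p$ is uniquely determined by the requirement that $F:=1-Jh^*$ be Birkhoff--James orthogonal to the linear subspace $JH^p=[J]_p$, i.e., $F\perp_p Jg$ for every $g\in H^p$. By James's criterion (Theorem~\ref{praccritperp}), this means $\int_{\T} F^{\langle p-1\rangle}\,Jg\,dm=0$ for all $g\in H^p$, so, recalling that the annihilator of $H^p$ in $L^q$ under the pairing $(f,g)\mapsto\int fg\,dm$ is $zH^q$, the minimizer is characterized by (i)~$F(a_j)=1$ for each $j$ (so that $1-F\in JH^p$) and (ii)~$F^{\langle p-1\rangle}J\in zH^q$.

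I would next take the explicit function from \eqref{jstarform} as a candidate and verify (i) and (ii). Writing it as $F^*=c\,\Phi^{2/p}$, where $c=(1-|a_1\cdots a_N|^2)^{2/p}$ and $\Phi(z)=\prod_{k=1}^{d}(1-\bar w_k z)/\prod_{j=1}^{N}(1-\bar a_j z)$ is outer (nonvanishing in $\D$), direct substitution shows (i) is exactly the constraint \eqref{consconds}. For (ii), computing on $\T$ using $|\Phi|^2=\Phi\bar\Phi$ gives $F^{*\langle p-1\rangle}=c^{p-1}\Phi^{(p-2)/p}\bar\Phi$, and using $\bar z=1/z$ on $\T$ one finds
\[
\bar\Phi(z)\,J(z)\;=\;\frac{z^{N-d}\,\prod_{k=1}^{d}(z-w_k)}{\prod_{j=1}^{N}(1-\bar a_j z)}
\]
after cancellation of the factors $(z-a_j)$. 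Since $N-d\geq 1$, this is the boundary value of a bounded analytic function vanishing to positive order at $0$; multiplying by the outer function $\Phi^{(p-2)/p}$ keeps the product in $zH^q$, confirming~(ii).

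For the distance, $\|F^*\|_p^p=c^p\int_{\T}|\Phi|^2\,dm$. Expressing $|\Phi|^2$ as a rational function of $z$ via $\bar z=1/z$ and applying the residue theorem, the only poles inside $\D$ are the simple poles at $z=a_j$; invoking \eqref{consconds} to simplify each residue reduces the integral to the Lagrange sum
\[
\sum_{j=1}^{N}\frac{a_j^{N-d-1}\,\prod_{k=1}^{d}(a_j-w_k)}{\prod_{j'\neq j}(a_j-a_{j'})}\;=\;1,
\]
where the value $1$ is the coefficient of $1/z$ in the expansion at infinity of the underlying monic degree-$(N-1)$ rational function (equivalently, the negative residue at infinity). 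This yields $\int|\Phi|^2\,dm=1/(1-|a_1\cdots a_N|^2)$, hence $\|F^*\|_p^p=1-|J(0)|^2$, from which the claimed distance follows.

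The main obstacle is establishing the existence of parameters $w_1,\ldots,w_d\in\overline{\D}\setminus\{0\}$ (for some $d$ with $1\leq d\leq N-1$) satisfying the interpolation conditions \eqref{consconds}, a Nevanlinna--Pick-type problem. I expect to resolve it by combining the existence and uniqueness of the metric projection (strict convexity of $H^p$ for $1<p<\infty$) with the structural consequences of condition (ii): any extremal $F$ is forced to take the form of a positive constant times the $2/p$-th power of an outer function whose denominator is $\prod_j(1-\bar a_j z)$, and the $w_k$'s are then extracted as the zeros of the numerator of that outer function. The case $N=1$ (formula \eqref{jstarform1}) requires no $w$'s: the constraint $F^*(a_1)=1$ is automatic, and verification of~(ii) together with the distance is a direct specialization of the computation above.
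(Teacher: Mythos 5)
Your verification framework is sound and matches the paper's: the candidate is confirmed to be the minimizer by checking that $1-F\in JH^p$ and that $F^{\langle p-1\rangle}J\in zH^q$, and your computation of $\bar{\Phi}J$ on $\T$ is essentially the paper's display \eqref{spicyham}. The genuine gap is the existence of the parameters $w_1,\ldots,w_d$, which you yourself flag as ``the main obstacle'' and propose to resolve by deducing the structure of the extremal function from the orthogonality conditions. That is the hard direction of the dual extremal problem (precisely the part that Duren's treatment of rational kernels leaves open as ``a very difficult problem''), and your sketch does not carry it out: nothing in your argument shows that an extremal $F$ must be a constant times the $2/p$-th power of a rational outer function with denominator $\prod_j(1-\bar{a}_j z)$, nor that the numerator has degree at most $N-1$ with zeros $1/\bar{w}_k$ outside $\D$, nor that the normalizing constant is $(1-|J(0)|^2)^{2/p}$. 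Without the $w_k$ in hand, there is no candidate to verify, so the proof does not close.

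The paper closes this gap constructively, with no appeal to the structure of extremals. Since $\overline{J(0)}J=\overline{a_1\cdots a_N}J$ maps $\D$ into the disk of radius $|a_1\cdots a_N|<1$, the function $1-\overline{J(0)}J$ takes values in a disk about $1$ not containing $0$ and is therefore outer; clearing denominators, $\prod_{k=1}^N(1-\bar{a}_k z)-(-1)^N\overline{a_1\cdots a_N}\prod_{k=1}^N(z-a_k)$ is an outer \emph{polynomial} whose degree-$N$ terms cancel, so it factors as $c\prod_{k=1}^d(1-\bar{w}_k z)$ with $d\leq N-1$, $0<|w_k|\leq 1$. Evaluating at $z=a_j$ (where the second product vanishes) gives \eqref{consconds} for free, and evaluating at $z=0$ gives $c=1-|a_1\cdots a_N|^2$. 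This also yields the identity $1-Jh^*=\left(1-\overline{J(0)}J\right)^{2/p}$, which makes the distance immediate: $\|(1-\overline{J(0)}J)^{2/p}\|_p^p=\|1-\overline{J(0)}J\|_2^2=1-|J(0)|^2$, with no residue computation and no assumption that the $a_j$ are distinct (your Lagrange-sum evaluation presumes simple poles at the $a_j$, which fails for repeated zeros, a case the proposition allows). I recommend replacing your existence sketch with this construction and your residue calculation with the $L^2$ identity.
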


\begin{proof}
The function
\[
      \overline{ a_1 a_2 \cdots a_N}J(z) = \overline{ a_1 a_2 \cdots a_N} \prod_{k=1}^{N}\frac{z - a_k}{1-\bar{a}_k z}
\]
takes values in a disk of radius $|a_1 a_2 \cdots a_N|<1$.  Consequently the function
\begin{equation*}%\label{Defn_Jhat}
      1 -  (-1)^N\overline{ a_1 a_2 \cdots a_N} \prod_{k=1}^{N}\frac{z - a_k}{1-\bar{a}_k z}  = 1 - \overline{J(0)}J(z)
\end{equation*}
is outer.  It remains outer if we multiply through by $\prod_{k=1}^{N}(1 - \bar{a}_k z)$.  That is to say,
\begin{equation}\label{roastchicken}
     \prod_{k=1}^{N}(1 - \bar{a}_k z) - (-1)^N \overline{ a_1 a_2 \cdots a_N} \prod_{k=1}^{N}(z - a_k)
\end{equation}
is an outer polynomial.  In fact, the degree $N$ terms cancel, leaving an outer polynomial of degree $d\leq N-1$. If $N = 1$, we simply obtain the constant $1 - |a_1|^2$ and the proof is complete for that case. Otherwise,  we write  the outer polynomial \eqref{roastchicken} as
\begin{equation}\label{outerpoly}
      c(1 - \bar{w}_1 z)(1 - \bar{w}_2 z)\cdots(1 - \bar{w}_{d}z).
\end{equation}
Because this polynomial is outer, each $w_k$ satisfies $0<|w_k| \leq 1$.   Substituting $z = a_j$ into the equation
\[
     \prod_{k=1}^{N}(1 - \bar{a}_k z) - (-1)^N \overline{ a_1 a_2 \cdots a_N} \prod_{k=1}^{N}(z - a_k) = c(1 - \bar{w}_1 z)(1 - \bar{w}_2 z)\cdots(1 - \bar{w}_{d}z)
\]
yields
\begin{align}
      \prod_{k=1}^{N}(1 - \bar{a}_k a_j) - (-1)^N \overline{ a_1 a_2 \cdots a_N} \prod_{k=1}^{N}(a_j - a_k) &= c(1 - \bar{w}_1 a_j)(1 - \bar{w}_2 a_j)\cdots(1 - \bar{w}_{d}a_j) \nonumber \\
       \prod_{k=1}^{N}(1 - \bar{a}_k a_j) -0  &= c \prod_{k=1}^{d}(1 - \bar{w}_k a_j) \nonumber \\
       1 &= c\frac{\prod_{k=1}^{d}(1 - \bar{w}_k a_j)}{\prod_{k=1}^{N}(1 - \bar{a}_k a_j)}  \label{masalachai}
\end{align} for each $j$, $1\leq j\leq N$.

The rational expression 
\[
    \frac{\prod_{k=1}^{d}(1 - \bar{w}_k z)}{\prod_{k=1}^{N}(1 - \bar{a}_k z)}
\]
is outer, so that its $2/p$ power is an analytic function in $\mathbb{D}$.  
The condition \eqref{masalachai} tells us that the $H^p$ function
\begin{equation*}%\label{Jhat2}
    1 - \Bigg[c\frac{\prod_{k=1}^{d}(1 - \bar{w}_k z)}{\prod_{k=1}^{N}(1 - \bar{a}_k z)}\Bigg]^{2/p}  = 1 - \left( 1 - \overline{J(0)} J(z) \right)^{2/p}
\end{equation*}
has zeros at $a_1, a_2,\ldots, a_N$, and thus can be written as $Jh$ for some $h \in H^p$.
  Rearranging gives
\begin{equation}\label{Jhat3}
     1 - J(z)h(z) = \Bigg[c\frac{\prod_{k=1}^{d}(1 - \bar{w}_k z)}{\prod_{k=1}^{N}(1 - \bar{a}_k z)}\Bigg]^{2/p} = \left( 1 - \overline{J(0)} J(z) \right)^{2/p},
\end{equation}
an outer function in $H^p$.

Recalling notation from \eqref{s-p}, we have, for $|z|=1$,
 \begin{align}
     (1- J h)^{\langle p-1 \rangle}J &=  \left\{ \Bigg[ c\frac{\prod_{k=1}^{d}(1 - \bar{w}_k z)}{\prod_{k=1}^{N}(1 - \bar{a}_k z)}\Bigg]^{2/p}\right\}^{\langle p-1 \rangle} \prod_{k=1}^{N}\frac{(z - a_k) }{ (1-\bar{a}_k z) } \nonumber \\
    &=  \left\{ \Bigg[ c\frac{\prod_{k=1}^{d}(1 - \bar{w}_k z)}{\prod_{k=1}^{N}(1 - \bar{a}_k z)}\Bigg]^{2/p}\right\}^{(p-2)/2} 
     \left\{\overline{ \Bigg[ c\frac{\prod_{k=1}^{d}(1 - \bar{w}_k z)}{\prod_{k=1}^{N}(1 - \bar{a}_k z)} } \Bigg]^{2/p}\right\}^{1+(p-2)/2}
     \prod_{k=1}^{N}\frac{(z - a_k) }{ (1-\bar{a}_k z) } \nonumber \\
     &=   \Bigg[ c\frac{\prod_{k=1}^{d}(1 - \bar{w}_k z)}{\prod_{k=1}^{N}(1 - \bar{a}_k z)}\Bigg]^{(p-2)/p}
      \bar{c}\frac{\prod_{k=1}^{d}(1 - {w}_k \bar{z})}{\prod_{k=1}^{N}(1 - {a}_k \bar{z})}
     \cdot\prod_{k=1}^{N}\frac{(z - a_k) }{ (1-\bar{a}_k z) }   \nonumber\\
     &=   \Bigg[ c\frac{\prod_{k=1}^{d}(1 - \bar{w}_k z)}{\prod_{k=1}^{N}(1 - \bar{a}_k z)}\Bigg]^{(p-2)/p} 
      \bar{c}z^{N-d} \frac{\prod_{k=1}^{d}(z - {w}_k )}{\prod_{k=1}^{N}(z - {a}_k)}
     \cdot\frac{ \prod_{k=1}^{N}(z - a_k) }{ \prod_{k=1}^{N}(1-\bar{a}_k z) } \nonumber \\
     &=  z^{N-d} \Bigg[ c\frac{\prod_{k=1}^{d}(1 - \bar{w}_k z)}{\prod_{k=1}^{N}(1 - \bar{a}_k z)}\Bigg]^{(p-2)/p}
      \bar{c} \frac{\prod_{k=1}^{d}(z - {w}_k )}{ \prod_{k=1}^{N}(1-\bar{a}_k z)}, \label{spicyham}
%      &= z^{N-d}\Phi(z)  \nonumber
 \end{align}
 which is $z^{N-d}$, multiplied by an element of $H^q$.  Notice also that the exponent $N-d$ is an integer greater than or equal to $1$.  These conditions imply that, for all $n\geq 0$,
 \[
      \int_{0}^{2\pi} (1- J(e^{i\theta}) h(e^{i\theta}))^{\langle p-1 \rangle}J(e^{i\theta}) e^{in\theta}\,\frac{d\theta}{2\pi} = 0,\ 
 \]
 and hence $1-Jh \perp_p J z^n$ for all indices $n\geq 0$.  We may conclude that $h= h^*$ is indeed the extremal function for which we are looking. 
 
By comparing \eqref{roastchicken} and \eqref{outerpoly} when $z=0$, we obtain $c = 1 - |a_1 a_2 \cdots a_N|^2$.  
Moreover, from \eqref{Jhat3}, 
\begin{align*}
\operatorname{dist}_{H^p} (1, [J]_p) &= \| 1 - J h^*\|_p\\
&= \left\|\left( 1 - \overline{J(0)} J \right)^{2/p}\right\|_p \\
&=  \|  1 - \overline{J(0)} J  \|_2^{2/p} \\
&= \left(1 - |J(0)|^2 \right)^{1/p},
\end{align*}
as claimed. 
\end{proof}

We note that Proposition \ref{Npoint} provides explicit information about the constants $w_k$. 
The solution (\ref{jstarform}) to the extremal problem is consistent with equation (11) on page 138 of Duren \cite{Duren}, which solves the general dual extremal problem in $H^p$ with rational kernels.  The benefit of the present approach, in which the kernel arises in connection with a finite Blaschke product, is that condition (\ref{consconds}) enables the direct calculation of the parameters $w_k$, $1 \leq k \leq N-1$, as well as the scaling factor; on the other hand, in \cite{Duren} the determination of these parameters is left open as a ``very difficult problem.''
%\chris{Why should we note this?} {\color{Red} I think the reason is to point out the added value of the long bitter proof (as opposed to the short and sweet proof), namely that it supplies the values of the mysterious parameters appearing in Section 8.4 of Duren. Maybe a sentence or two along these lines would be in order.} 
%\chris{I don't have a copy of Duren handy-- if the constants appear there, then is there a solution to the same problem? If so, why are we including the result? What's novel? Just playing devil's advocate here.} {\color{Red} Duren solves a more general problem (rational kernels) abstractly, resulting in existence of these parameters $w_k$, and notes that finding the actual values of these parameters is the really challenging part. Well, when the rational kernel comes from a finite Blaschke product, we supply a complete answer.}
For $N = 1,$ the constants $w_k$ are absent and the formula \eqref{jstarform1} is a normalized power of the Szeg\"{o} kernel.

Using the following lemma, we can extend the hypotheses of Proposition \ref{Npoint} to include any inner function.

\begin{Lemma}\label{FBPapprox}
	Let $1<p<\infty$ and $J$ be an inner function. Then there exists a sequence $(J_n)_{n\ge0}$ of finite Blaschke products such that $\|J_n-J\|_p\to0$ as $n \to \infty$.
\end{Lemma}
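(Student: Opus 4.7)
The plan is to approximate $J$ in $L^p(\T)$ in two stages: first reduce to a (possibly infinite) Blaschke product via Frostman's theorem, and then approximate that Blaschke product by its finite partial products. By Frostman's theorem, for every $a\in\D$ outside an exceptional set of logarithmic capacity zero, the Möbius-transformed inner function $\phi_a(J):=(J-a)/(1-\bar a J)$ is a Blaschke product. Picking any sequence $a_n\to 0$ avoiding this exceptional set, a direct calculation on $\T$ (where $|J|=1$ a.e.) gives
\[
\bigl|\phi_{a_n}(J)-J\bigr|=\frac{\bigl|\bar a_n J^2-a_n\bigr|}{|1-\bar a_n J|}\leq \frac{2|a_n|}{1-|a_n|}\longrightarrow 0,
\]
so $\phi_{a_n}(J)\to J$ uniformly on $\T$, and in particular in $L^p$.

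Next, I would show that any Blaschke product $B$ with zeros $\{\alpha_k\}$ is the $L^p$-limit of its finite partial products $B_N:=\prod_{k\leq N}b_{\alpha_k}$, where $b_{\alpha_k}$ denotes the usual normalized Blaschke factor. Write $C_N:=B/B_N$, which is itself a Blaschke product with $|C_N|=1$ a.e.\ on $\T$; since $|B_N|=1$ on $\T$, one has $|B-B_N|=|C_N-1|$ there, and the problem reduces to $\|C_N-1\|_p\to 0$. In $L^2$,
\[
\|C_N-1\|_2^2 = 2-2\operatorname{Re}C_N(0)\longrightarrow 0,
\]
because $C_N(0)$ is a tail of the convergent Blaschke product $\prod_k|\alpha_k|$ and so tends to $1$. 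For $1<p\leq 2$ the inclusion $L^2(\T)\subset L^p(\T)$ finishes the job, and for $p>2$ one upgrades via the uniform bound $|C_N-1|\leq 2$ and $\|C_N-1\|_p^p\leq 2^{p-2}\|C_N-1\|_2^2$.

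A standard diagonal argument then concludes the proof: for each $n$, choose a finite partial product $J_n$ of the (possibly infinite) Blaschke product $\phi_{a_n}(J)$ with $\|J_n-\phi_{a_n}(J)\|_p<1/n$, and the triangle inequality gives $\|J_n-J\|_p\to 0$. The main obstacle, in my view, is the substantive input of Frostman's theorem, which is classical but nontrivial. One could alternatively start from Carath\'eodory's density theorem for finite Blaschke products in the closed unit ball of $H^\infty$ under locally uniform convergence, but that route would require upgrading interior compact convergence to $L^p$ boundary convergence—precisely the step Frostman sidesteps by producing approximants whose boundary values are already unimodular at each stage.
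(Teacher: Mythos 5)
Your proof is correct and follows essentially the same route as the paper's: reduce to Blaschke products via Frostman's theorem, approximate by finite partial products using the classical $L^2$ convergence, and upgrade to $L^p$ for $p>2$ via the uniform bound $|J_n-J|\leq 2$. The only difference is one of self-containedness: where the paper cites the density of Blaschke products in the inner functions (Garnett) and the $L^2$ convergence of partial products (Hoffman), you derive both directly via the explicit estimate $|\phi_{a_n}(J)-J|\leq 2|a_n|/(1-|a_n|)$ on $\mathbb{T}$ and the identity $\|C_N-1\|_2^2=2-2\operatorname{Re}C_N(0)$.
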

\begin{proof}
	Firstly, it is known that the set of (infinite) Blaschke products is dense in the set of inner functions with respect to the $H^\infty$ norm, and thus also the $H^p$ norm (this is a corollary of a theorem of Frostman; see \cite[Corollary 6.5]{MR628971}). Thus, it suffices to consider the case where $J$ is an infinite Blaschke product
	\[
	J(z) = \zeta\prod_{k=1}^{\infty}  \frac{|a_k|}{a_k}\frac{ a_k -z }{ 1- \bar{a_k} z }, \ \ z\in\D,
	\] 
	where $\zeta\in\mathbb{T}$ and $(a_k)_{k=1}^\infty$ is a Blaschke sequence in $\D$. For $n\in\mathbb{N}$ we set
	\[
	J_n(z)  = \zeta\prod_{k=1}^{n}\frac{|a_k|}{a_k}\frac{ a_k -z }{ 1- \bar{a_k} z }, \ \ \ z\in\D.
	\]
	We know that $\|J_n-J\|_2\to0$ (see the lemma on p.\ 64 in \cite{MR0133008} and the discussion thereafter). Thus, the result follows immediately if $1 < p < 2$. If $p>2$, then, almost everywhere on the circle, we have 
    \[
    |J_n-J|^p=|J_n-J|^2\cdot|J_n-J|^{p-2}\leq|J_n-J|^2\cdot2^{p-2}
    \]
    and so $\|J_n-J\|_p\leq\|J_n-J\|_2^{\frac{1}{p}}\cdot2^{\frac{p-2}{p}}$, which gives the desired result.
\end{proof}

We now extend Proposition \ref{Npoint}.

\begin{Theorem}\label{fstarnozero_0}
Let $1<p<\infty$.  Suppose $J$ is an inner function with $J(0)\neq 0$, and 
	\begin{equation}\label{ext_prob}
	\inf_{h\in H^p}\|1 - Jh\|_p
	\end{equation}
	is attained when $h = h^*$.  Then $[Jh^*]_p = [J]_p$ and, in particular, $h^*$ is outer.  Moreover, 
    \[
    \operatorname{dist}_{H^p} (1, [J]_p) = \left(1 - |J(0)|^2 \right)^{1/p}.
    \]

\end{Theorem}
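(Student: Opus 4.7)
The plan is to first establish the distance formula by approximating $J$ with finite Blaschke products, and then use that formula as a lever to rule out any non-trivial inner factor of $h^*$. By Lemma \ref{FBPapprox}, choose a sequence $(J_n)$ of finite Blaschke products converging to $J$ in $H^p$. Convergence in $H^p$ implies uniform convergence on compact subsets of $\D$, so $J_n(0)\to J(0)$; in particular $J_n(0)\neq 0$ for all large $n$, and Proposition \ref{Npoint} yields
\[
\operatorname{dist}_{H^p}(1,[J_n]_p) \;=\; (1-|J_n(0)|^2)^{1/p}\;\longrightarrow\;(1-|J(0)|^2)^{1/p}.
\]

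To pass the distance itself through the limit, I would use that $|J|=1$ a.e.\ on $\T$ to rewrite the approximation problem as an $L^p$ distance:
\[
\operatorname{dist}_{H^p}(1,[J]_p)=\inf_{h\in H^p}\|1-Jh\|_p=\inf_{h\in H^p}\|\bar{J}-h\|_p=\operatorname{dist}_{L^p}(\bar{J},H^p),
\]
and likewise for each $J_n$. The map $\psi\mapsto\operatorname{dist}_{L^p}(\psi,H^p)$ is $1$-Lipschitz by the triangle inequality, so $|\operatorname{dist}_{L^p}(\bar{J_n},H^p)-\operatorname{dist}_{L^p}(\bar{J},H^p)|\leq\|J_n-J\|_p\to 0$. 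Combining with the previous display gives $\operatorname{dist}_{H^p}(1,[J]_p)=(1-|J(0)|^2)^{1/p}$, which is the second assertion of the theorem.

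For the outerness of $h^*$, write its inner--outer factorization $h^*=I\cdot h^{\#}$ and suppose toward contradiction that $I$ is non-constant inner. Then $Jh^*=(JI)h^{\#}\in[JI]_p$, so $\|1-Jh^*\|_p\geq\operatorname{dist}_{H^p}(1,[JI]_p)$. If $I(0)\neq 0$, the distance formula just established, applied to the inner function $JI$, gives $\operatorname{dist}_{H^p}(1,[JI]_p)=(1-|J(0)I(0)|^2)^{1/p}$; if instead $I(0)=0$, then every $g\in[JI]_p$ vanishes at $0$, and the pointwise estimate $|g(0)|\leq\|g\|_p$ applied to $1-g$ forces $\operatorname{dist}_{H^p}(1,[JI]_p)\geq 1=(1-|J(0)I(0)|^2)^{1/p}$. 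Either way, combining with $\|1-Jh^*\|_p=(1-|J(0)|^2)^{1/p}$ yields $|I(0)|\geq 1$; since $\|I\|_\infty\leq 1$, the maximum modulus principle forces $I$ to be a unimodular constant, contradicting non-constancy. Hence $h^*$ is outer, and so the inner factor of $Jh^*$ is exactly $J$, giving $[Jh^*]_p=J\cdot H^p=[J]_p$.

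The main obstacle is the passage to the limit in the distance formula, because a general inner $J$ can carry a singular inner factor that no finite Blaschke product can reproduce; this is what makes the rewriting as $\operatorname{dist}_{L^p}(\bar{J},H^p)$ essential, since it converts the difficulty into the soft $1$-Lipschitz continuity of the distance function on $L^p$. Once that is in hand, the outerness claim is essentially forced: any non-trivial inner factor of $h^*$ would strictly shrink $|J(0)|$ to $|J(0)I(0)|$, and the distance formula then turns that shrinkage into a strict increase of the optimal $H^p$ norm, contradicting the optimality of $h^*$.
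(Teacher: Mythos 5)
Your proof is correct and follows essentially the same path as the paper: approximate $J$ by finite Blaschke products via Lemma \ref{FBPapprox}, pass the distance formula of Proposition \ref{Npoint} to the limit, and then derive outerness of $h^*$ by contradiction from the strict increase of the distance caused by an extra inner factor. The one place you diverge is in justifying the limit: the paper cites continuity of metric projections \cite[Proposition 4.8.1]{CMR}, whereas you rewrite $\operatorname{dist}_{H^p}(1,[J]_p)=\operatorname{dist}_{L^p}(\overline{J},H^p)$ and invoke the $1$-Lipschitz continuity of $\psi\mapsto\operatorname{dist}_{L^p}(\psi,H^p)$ --- a self-contained and arguably cleaner substitute --- and your separate treatment of the case $I(0)=0$ in the outerness step is slightly more careful than the paper's.
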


\begin{proof}
By Lemma \ref{FBPapprox}, we can find a sequence $(J_n)_{n\ge0}$ of finite Blaschke products converging to $J$ in $H^p$.
Hence, by continuity of metric projections, we have $\operatorname{dist}_{H^p} (1, [J_n]_p) \to \operatorname{dist}_{H^p} (1, [J]_p)$ as $n\to\infty$ (see \cite[Proposition 4.8.1]{CMR}).
By Proposition \ref{Npoint}, 
\begin{equation*}
\operatorname{dist}_{H^p} (1, [J_n]_p) = \left(1 - |J_n(0)|^2 \right)^{1/p}, 
\end{equation*}
but since $J_n \rightarrow J$ in $H^p$, certainly $J_n(0) \rightarrow J(0),$ and therefore 
\begin{equation}\label{distance}
\operatorname{dist}_{H^p} (1, [J]_p) = \left(1 - |J(0)|^2 \right)^{1/p}.
\end{equation}
Now let us show that $[Jh^*]_p = [J]_p$, i.e., that $h^*$ is outer. Suppose $h^* = J_1 g$ for some nontrivial inner function $J_1$ and $g \in H^p$ outer. Then by the above established distance formula (\ref{distance}), and using the fact that $|J_1(0)| < 1$, we obtain 
\begin{align*}
\| 1 - Jh^* \|_p^p &= 
\inf_{h \in H^p} \| 1 - Jh \|_p^p\\
&\geq \inf_{g \in H^p} \| 1 - J J_1 g \|_p^p\\
&=  1 - |J(0)|^2 |J_1(0)|^2\\
&> 1 - |J(0)|^2\\
&= \| 1 - Jh^* \|_p^p,
\end{align*}
which is a contradiction, and therefore $h^*$ must be outer.
\end{proof}

Note for $1 < p < \infty$, $p \neq 2$, the distance formula in Theorem \ref{fstarnozero_0} appears to be previously unknown. This formula reveals additional information concerning the lattice of invariant subspaces of $H^p$ in that it allows for quantitative comparison among $z$-invariant subspaces based on the generator of the subspace.
In addition to this observation, the proof of Proposition \ref{Npoint} gives rise to a guess for the metric projection of $1$ onto any proper invariant subspace. This observation, fortuitously, allows us to streamline the proof of a more general theorem that encompasses the previously established results of this section.

\begin{Theorem}\label{s_and_s}
Let $1 < p < \infty$ and $f \in H^p$, $f(0) \neq 0$. Put $f = J F$, with $J$ inner and $F$ outer, and let $\hat{J} = \overline{J(0)}J$. Let  $g_p^*$ be the metric projection of $1$ onto $[f]_p$, that is, $g_p^*$ is the unique solution to the minimization problem 
\[
\inf_{g \in [f]_p} \| 1 - g \|_p.
\]
Then $g_p^*$ is given as
\[
g_p^* = 1 - (1 - \hat{J})^{2/p}.
\]
Moreover, $g_p^*$ has no inner factor other than $J$ and
\[
\operatorname{dist}_{H^p}(1, [f]_p) = (1 - |J(0)|^2)^{1/p}. 
\]
\end{Theorem}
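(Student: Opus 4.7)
The plan is to guess $g_p^*$ from equation \eqref{Jhat3} in the proof of Proposition \ref{Npoint} and verify it directly via Birkhoff-James orthogonality. First, I would reduce to the case $f = J$: since $F$ is outer $[F]_p = H^p$, and since multiplication by the inner function $J$ is an $L^p$-isometry, $[f]_p = [JF]_p = J \cdot H^p = [J]_p$, so the problem becomes $\inf_{h \in H^p} \|1 - Jh\|_p$. Now set $g := 1 - (1-\hat J)^{2/p}$, using the principal branch; this is well-defined and holomorphic on $\D$ because $|\hat J(z)| \leq |J(0)| < 1$ forces $1 - \hat J$ into a compact subset of the right half-plane. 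Writing $1 - (1-w)^{2/p} = w\psi(w)$ with $\psi$ holomorphic and bounded on $\{|w|<1\}$, one gets $g = \hat J\,\psi(\hat J) = J \cdot [\overline{J(0)}\,\psi(\hat J)] \in J\cdot H^\infty \subset [J]_p$.

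The key step is to check that $(1-g) \perp_p Jh$ for every $h \in H^p$, which by Theorem \ref{praccritperp} amounts to $\int_\T (1-g)^{\langle p-1\rangle} Jh\, dm = 0$. A boundary computation using $\overline{\hat J} = J(0)/J$ a.e.\ on $\T$ together with the principal-branch identity $\overline{u}^{2/p} = |u|^{4/p}/u^{2/p}$ (valid in the right half-plane) should yield
\[
(1-g)^{\langle p-1\rangle}\, J \ = \ (J - J(0))\,(1-\hat J)^{(p-2)/p} \qquad \text{a.e.\ on } \T.
\]
Both factors on the right are in $H^\infty$, and $J - J(0)$ has a zero at the origin, so the product lies in $zH^\infty$. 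Hence $(1-g)^{\langle p-1\rangle} Jh \in zH^1$ for each $h \in H^p$, and its integral over $\T$ vanishes. Uniqueness of the metric projection then gives $g = g_p^*$.

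The distance formula falls out directly: $\|1-g_p^*\|_p^p = \int_\T |1-\hat J|^2\, dm = 1 - |J(0)|^2$, using that $|\hat J| = |J(0)|$ a.e.\ on $\T$ and $\int \hat J\, dm = \hat J(0) = |J(0)|^2$. The assertion that $g_p^*$ has no inner factor beyond $J$ is equivalent to $h^* = g_p^*/J$ being outer, which was already proved in Theorem \ref{fstarnozero_0}. I expect the main obstacle to be the boundary identity collapsing $(1-g)^{\langle p-1\rangle} J$ to the clean right-hand side above: careful branch tracking is needed, and the miraculous cancellation is possible precisely because the specific form $\hat J = \overline{J(0)} J$ makes $\overline{1 - \hat J}$ a rational function of $J$ (hence algebraically manageable on $\T$) rather than of $\overline J$.
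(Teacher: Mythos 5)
Your proposal is correct and follows essentially the same route as the paper: reduce to $[f]_p=[J]_p$ via Beurling's Theorem, verify the candidate $1-(1-\hat{J})^{2/p}$ through the Birkhoff--James integral criterion of Theorem \ref{praccritperp}, obtain the distance as $\|1-\hat{J}\|_2^{2/p}$, and defer to Theorem \ref{fstarnozero_0} for the absence of an extra inner factor. Your boundary identity $(1-g)^{\langle p-1\rangle}J=(J-J(0))\,(1-\hat{J})^{(p-2)/p}\in zH^{\infty}$ is exactly the content of the paper's computation (there organized as two conjugate-analytic integrals that cancel), and your $w\psi(w)$ factorization cleanly supplies the membership $g\in J\cdot H^{\infty}\subset[J]_p$, a point the paper leaves implicit.
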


\begin{proof}
Note that for $z \in \D,$  $|\hat{J}(z)| \leq |J(0)| < 1 $ and therefore $1 - \hat{J}$ is non-vanishing in the disk, and so $g_p^*$ is a well-defined function in $H^p$.  We would like to show that $g_p^*$ is the metric projection of $1$ onto $[f]_p$. 
By Beurling's Theorem for $H^p$, it suffices to consider the infimum of $\|1 - g\|_p$ for $g \in [J]_p$ and show that the Birkhoff-James orthogonality conditions hold:
\[
1 - g_p^* \perp_p z^k J \ \  \forall k \ge 0.
\]
Observe, for $k \ge 0$, 
\begin{align*}
\int_\T\left| 1 - g_p^* \right|^{p-2} (1 - g_p^*)\ \overline{z^kJ} \,dm
&= \int_\T \left| (1 - \hat{J})^{2/p} \right|^{p-2} \ (1 - \hat{J})^{2/p} \ \overline{z^kJ} \,dm\\
&= \int_\T (1 - \hat{J}) \  \overline{(1 - \hat{J})}^{(p-2)/p} \ \overline{z^kJ} \,dm\\
&= \int_\T \overline{(1 - \hat{J})}^{(p-2)/p} \ \overline{z^kJ}\,dm - \overline{J(0)}\int_\T \overline{(1 - \hat{J})}^{(p-2)/p} \ \overline{z^k} \,dm\\
&= 0.
\end{align*}
Further, 
\[
\| 1 - g_p^* \|_p^p =\| (1 - \hat{J})^{2/p}\|_p^p = \| 1-  \hat{J}\|_2^2 = 1 - |J(0)|^2,
\]
 that is, 
\[
\operatorname{dist}_{H^p}(1, [f]_p) = (1 - |J(0)|^2)^{1/p}. 
\]
Now, by the same argument as in the end of Theorem \ref{fstarnozero_0}, $g_p^*$ has no additional inner factor besides $J$.
\end{proof}

\subsection{Consequences and corollaries}

The distance formula appearing in Theorem \ref{s_and_s} tells us that an additional inner factor \textit{strictly} increases the distance between 1 and the corresponding invariant subspace. Namely, we have the following:
\begin{Corollary}\label{nested_spaces}
Let $ 1 < p < \infty$ and let $M$ be a nontrivial closed $z$-invariant subspace of $H^p$.  Let $N \subsetneq M$ be a strictly smaller invariant subspace.  Then 
\begin{equation*}
\operatorname{dist}_{H^p}(1, N)
     > \operatorname{dist}_{H^p}(1, M).
\end{equation*}
\end{Corollary}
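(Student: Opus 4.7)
The strategy is to invoke the distance formula of Theorem \ref{s_and_s}. First, I would apply Beurling's theorem to both subspaces, writing $M = [J]_p$ and $N = [K]_p$ for inner functions $J$ and $K$. Because $N \subseteq M$, the function $K$ lies in $J H^p$, so $J_1 := K/J \in H^p$. As the quotient of two inner functions, $J_1$ has unit modulus almost everywhere on $\T$ and is therefore inner itself; thus $K = J J_1$. The strict containment $N \subsetneq M$ precludes $J_1$ from being a unimodular constant (otherwise $[K]_p = [J]_p$), so $J_1$ is a non-constant inner function.

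Next, I would apply Theorem \ref{s_and_s} to both generators to obtain
\[
\operatorname{dist}_{H^p}(1, M) = \left(1 - |J(0)|^2\right)^{1/p}, \qquad \operatorname{dist}_{H^p}(1, N) = \left(1 - |J(0) J_1(0)|^2\right)^{1/p}.
\]
This step tacitly uses $J(0) \neq 0$ for Theorem \ref{s_and_s} to apply; the degenerate case $J(0) = 0$ makes both distances equal to $\|1\|_p = 1$, and so must be understood as excluded by the word ``nontrivial'' in the hypothesis (equivalently, one is restricting to $M$ with $\operatorname{dist}_{H^p}(1,M) < 1$).

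To conclude, since $J_1$ is a non-constant bounded analytic function on $\D$ whose boundary values have unit modulus almost everywhere, the maximum principle yields $|J_1(0)| < 1$ strictly. Combined with $|J(0)| > 0$, this gives $|J(0) J_1(0)|^2 < |J(0)|^2$, and the strict monotonicity of $t \mapsto (1-t)^{1/p}$ on $[0,1]$ then produces the required strict inequality. There is no real obstacle here: all the substantive work is contained in Theorem \ref{s_and_s}, and the corollary reduces to the elementary observation that tacking on an extra non-constant inner factor strictly shrinks the modulus at the origin.
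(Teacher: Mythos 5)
Your proof is correct and is exactly the argument the paper intends (the paper states this corollary without proof, as an immediate consequence of the distance formula in Theorem \ref{s_and_s}): factor the generators as $K=JJ_1$ with $J_1$ a non-constant inner function and compare $\left(1-|J(0)J_1(0)|^2\right)^{1/p}$ with $\left(1-|J(0)|^2\right)^{1/p}$. Your remark that strictness requires $J(0)\neq 0$ (e.g.\ $N=[z^2]_p\subsetneq M=[z]_p$ gives equal distances $1$) is a legitimate caveat that the paper leaves implicit, and you handle it appropriately.
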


%As one example of an application of this corollary,  if we take the Blaschke product $B_n$ with zero of multiplicity $n$ at $1 - 1/n$, then $B_n$ converges pointwise to the atomic singular inner function $e^{-\frac{1+z}{1-z}}.$  Moreover, it is easy to check that $|B_n(0)| > |B_{n+1}(0)|.$

Let us employ this corollary in an example. 
\begin{Example}
For each $n\ge2$, let $B_n$ be the Blaschke product having precisely a zero of multiplicity $n$ at $1 - 1/n$, and zeros nowhere else. One may check that $B_n$ converges pointwise to the atomic singular inner function $e^{-\frac{1+z}{1-z}}$.  Moreover, it is easy to see that $|B_n(0)| < |B_{n+1}(0)|$.
Therefore, the invariant subspace generated by the singular inner function is closer to $1$ than any of the subspaces generated by the associated Blaschke products which approximate the singular inner function. This is an example of a more general phenomenon, namely the distance of $1$ to an invariant subspace generated by a singular inner function is often smaller, in the long run, than the distance between $1$ and the invariant subspaces generated by Blaschke products converging to that singular inner function.  This sheds additional light on the structure of the lattice of $z$-invariant subspaces of $H^p$.
\end{Example}
\iffalse
\begin{Example}
As an application of this corollary, if we take the Blaschke product $B_n$, having precisely a zero of multiplicity $n$ at $1 - 1/n$, then $B_n$ converges pointwise to the atomic singular inner function $e^{-\frac{1+z}{1-z}}$.  Moreover, it is easy to check that $|B_n(0)| > |B_{n+1}(0)|$.
Therefore, the invariant subspace generated by the singular inner function is buried deeper away from $1$ than any of the subspaces generated by the associated Blaschke products. This is an example of a more general phenomenon, namely the distance of $1$ to an invariant subspace generated by a singular inner function is smaller in the long run than the distance between $1$ and the invariant subspaces generated by Blaschke products converging to that singular inner function.  This sheds additional light on the structure of the lattice of $z$-invariant subspaces of $H^p$.
\end{Example}
\fi

We note also that if $f = J F \in H^p$ where $J$ is a nontrivial inner function and $F$ is outer, then 
\begin{equation*}
\operatorname{dist}_{H^p}(1, [f]_p) = \inf_{g \in H^p} \| 1 - Jg \|_p^p = \inf_{g \in H^p} \| \overline{J} - g \|_{L^p}^p.
\end{equation*}
Therefore, the problem of finding the metric projection of $1$ in $H^p$ onto the invariant subspace generated by an inner function is the same as the problem of best approximation of the conjugate of that inner function in $L^p$ by an $H^p$ function. Theorem \ref{s_and_s} tells us that the best approximation is an outer function. Phrasing the problem in this way also naturally brings in the tool of duality in extremal problems.
Applying previously mentioned duality \eqref{duality} to this particular extremal problem, Theorem \ref{s_and_s} gives rise to a highly nontrivial inequality for the dual problem, which we record now. 

\begin{Corollary}\label{J_Ineq}
    Let $ 1 < q < \infty$. If $J_1$ and $J_2$ are non-constant inner functions not vanishing at the origin, then the following strict inequality holds:
    \begin{equation*}
        \sup_{\|g\|_q \leq 1} \left| \frac{1}{2 \pi i} \int_{\T} g \overline{J_1} \, d \zeta \right| <    \sup_{\|g\|_q \leq 1} \left| \frac{1}{2 \pi i} \int_{\T} g \overline{J_1 J_2} \, d \zeta \right|.
    \end{equation*}
\end{Corollary}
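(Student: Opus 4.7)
The plan is to translate both suprema in the corollary, via the duality \eqref{duality}, into metric-projection distances to which Theorem \ref{s_and_s} applies directly, and then observe that tacking on the inner factor $J_2$ at the origin strictly increases that distance. Throughout, let $p = q/(q-1)$.

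First I would apply \eqref{duality} with $\psi = \overline{J_1} \in L^\infty \subset L^p$ to obtain
\[
\sup_{\|g\|_q \le 1} \left| \frac{1}{2\pi i} \int_{\T} g\, \overline{J_1}\, d\zeta \right| \; = \; \inf_{\varphi \in H^p} \|\overline{J_1} - \varphi\|_{L^p}.
\]
Because $|J_1| \equiv 1$ on $\T$, multiplication by $J_1$ is an isometry of $L^p(\T)$, so this infimum equals $\inf_{\varphi \in H^p} \|1 - J_1 \varphi\|_{L^p}$. As $\varphi$ ranges over $H^p$, the product $J_1\varphi$ sweeps out exactly $J_1 \cdot H^p = [J_1]_p$, so the infimum agrees with $\operatorname{dist}_{H^p}(1,[J_1]_p)$, which by Theorem \ref{s_and_s} equals $\bigl(1 - |J_1(0)|^2\bigr)^{1/p}$.

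Repeating the same three steps with $J_1$ replaced by the inner function $J_1 J_2$ (note that $(J_1J_2)(0) = J_1(0)J_2(0) \neq 0$) rewrites the right-hand side of the corollary as $\bigl(1 - |J_1(0)|^2 |J_2(0)|^2\bigr)^{1/p}$. The desired strict inequality therefore reduces to $|J_2(0)|^2 < 1$, which holds because $J_2$ is a non-constant inner function: $\|J_2\|_\infty = 1$ can only be attained in the interior of $\D$ by a constant, by the maximum modulus principle. Alternatively, since $[J_1J_2]_p \subsetneq [J_1]_p$, one could invoke Corollary \ref{nested_spaces} directly in place of the explicit distance formula.

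I do not anticipate a serious obstacle here: all of the hard analytic content is already packaged in Theorem \ref{s_and_s}. The only non-routine observation is the isometry trick turning the best-approximation-of-$\overline{J_1}$-in-$L^p$ problem into the best-approximation-of-$1$-by-$[J_1]_p$ problem, and the only point of hypothesis-checking worth emphasizing is that the word \emph{non-constant} (as opposed to merely $J_i(0) \neq 0$) is precisely what forces $|J_2(0)| < 1$ and hence the \emph{strict} inequality.
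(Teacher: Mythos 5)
Your proposal is correct and follows essentially the same route as the paper, whose proof is the one-liner ``follows from Corollary \ref{nested_spaces} and duality \eqref{duality}'': you simply fill in the duality/isometry bookkeeping and substitute the explicit distance formula of Theorem \ref{s_and_s} where the paper cites Corollary \ref{nested_spaces} (itself an immediate consequence of that formula), and you correctly isolate $|J_2(0)|<1$ as the source of strictness.
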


\begin{proof}
 The result follows from Corollary \ref{nested_spaces} and duality \eqref{duality}. 
\end{proof}
Note that Corollary \ref{J_Ineq} does not hold for $q = 1$, since by Poreda's Theorem \cite{Po76} for any finite Blaschke product $B$, 
\begin{equation*}
    \inf_{f \in H^{\infty}} \|\overline{B} - f \|_{L^{\infty}} = 1.
\end{equation*}
Then by duality, if $J_1$ and $J_2$ are finite Blaschke products, both supremum values in Corollary \ref{J_Ineq} are equal to one.

Applying Corollary \ref{J_Ineq} to Blaschke products, we obtain the following non-trivial inequality:

\begin{Corollary}\label{dual_inequality}
Let $ 1 < q < \infty$. Let $\{ a_k\}_{k=1}^{\infty}$ be a sequence of distinct values in $\D\setminus \{0\}$ and let $n > m$. Then the following strict inequality holds:
\begin{equation*}
 \sup_{\|g\|_q =1}  \left| \sum_{j=1}^m g(a_j) \left( \frac{\prod_{k=1}^m (1 - \overline{a_k} a_j )}{\prod_{k=1, k \neq j}^m (a_k - a_j)} \right) \right| < \sup_{\|g\|_q =1}  \left| \sum_{j=1}^n g(a_j) \left( \frac{\prod_{k=1}^n (1 - \overline{a_k} a_j )}{\prod_{k=1, k \neq j}^n (a_k - a_j)} \right) \right|. 
\end{equation*}
\end{Corollary}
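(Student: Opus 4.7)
The plan is to identify each side of the claimed inequality as a dual extremal quantity from \eqref{duality}, applied to the conjugate of an appropriate finite Blaschke product, and then invoke Corollary \ref{J_Ineq}.

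For each $N\ge1$, I would set
\[
B_N(z) = \prod_{k=1}^{N}\frac{z - a_k}{1 - \overline{a_k}z},
\]
a non-constant inner function with $B_N(0)\neq 0$, since the $a_k$ are distinct and lie in $\D\setminus\{0\}$. On $\T$ one has $\overline{B_N(\zeta)} = \prod_{k=1}^{N}(1 - \overline{a_k}\zeta)/(\zeta - a_k)$, and this expression extends meromorphically across $\D$ with simple poles precisely at $a_1,\ldots,a_N$. For $g\in H^q$, I would argue via the dilations $g_r(z) = g(rz)$ (which are analytic on a neighborhood of $\overline{\D}$), apply the residue theorem to $g_r\overline{B_N}$, and then let $r\to 1^-$, obtaining
\[
\frac{1}{2\pi i}\int_\T g(\zeta)\overline{B_N(\zeta)}\,d\zeta \;=\; \sum_{j=1}^{N} g(a_j)\,\frac{\prod_{k=1}^{N}(1 - \overline{a_k}a_j)}{\prod_{k\neq j}(a_j - a_k)}.
\]
Since $\prod_{k\neq j}(a_j - a_k) = (-1)^{N-1}\prod_{k\neq j}(a_k - a_j)$, the modulus of this integral coincides exactly with the expression inside the absolute value in the corollary's statement.

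Taking $\sup_{\|g\|_q \leq 1}$ on both sides (equivalent to $\sup_{\|g\|_q = 1}$ by homogeneity), I would then apply Corollary \ref{J_Ineq} with $J_1 = B_m$ and $J_2 = \prod_{k=m+1}^{n}(z - a_k)/(1 - \overline{a_k}z)$. Since $n > m$, both factors are non-constant inner functions not vanishing at the origin, and $J_1 J_2 = B_n$; thus the strict inequality of Corollary \ref{J_Ineq} transfers directly to the desired inequality. The only mildly technical point is justifying the residue computation for a general $g\in H^q$ rather than for a $g$ continuous on $\overline{\D}$, but the dilation argument sketched above (using boundedness of $\overline{B_N}$ on $\T$ and $L^q$-convergence $g_r\to g$) handles this cleanly; after that, the remainder of the proof is routine bookkeeping of signs and product indices.
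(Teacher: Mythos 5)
Your proposal is correct and follows essentially the same route as the paper: identify each supremum as the dual extremal quantity for $\overline{B_m}$ and $\overline{B_n}$ via the residue theorem, then invoke Corollary \ref{J_Ineq} with $J_1 = B_m$ and $J_2 = B_n/B_m$. The only differences are cosmetic (a sign convention in the Blaschke factors, absorbed by the modulus) plus your extra care with the dilation argument for general $g \in H^q$, which the paper leaves implicit.
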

\begin{proof}
Let $J_1(z) = \prod_{k=1}^m \frac{a_k -z}{1 - \overline{a_k}z}$.  Then for any $g \in H^q,$ since $\overline{J_1(\zeta)} = \frac{1}{J_1(\zeta)}$ for $|\zeta| = 1$ and by the Residue Theorem,
\begin{equation*}
    \frac{1}{2 \pi i} \int_{\T} g \overline{J_1} \, d \zeta = \frac{1}{2 \pi i} \int_{\T} g(\zeta) \left( \prod_{k=1}^m \frac{1 - \overline{a_k}\zeta}{a_k -\zeta} \right) \, d \zeta = \sum_{j=1}^m \operatorname{Res }(\tilde{g}; a_j),
\end{equation*}
where $\tilde{g}(z) = g(z) \left( \prod_{k=1}^m \frac{1 - \overline{a_k}z}{a_k -z} \right)$.
Note that 
\begin{equation*}
    \operatorname{Res }(g; a_j) = g(a_j) \left( \frac{\prod_{k=1}^m (1 - \overline{a_k} a_j )}{\prod_{k=1, k \neq j}^m (a_k - a_j)} \right).
\end{equation*}
Doing a similar computation for the right hand side of the inequality yields the result.
\end{proof}

Clearly, one can easily extend Corollary \ref{dual_inequality} to infinite Blaschke products $B_1$ and $B_2$ having distinct zeros and such that the zero set of $B_1$ is strictly contained in the zero set of $B_2$, and the zero set of $B_2$ does not contain the origin.
When we apply Corollary \ref{dual_inequality} to two Blaschke factors, we obtain the following:

\begin{Example}
Suppose $a, b \in \D \setminus \{0\}$ are distinct and let $J_1(z) = \frac{z-a}{1-\bar{a}z}$ and $J_2(z)=\frac{z-b}{1-\bar{b}z}$. Then the left hand side of the inequality in Corollary \ref{dual_inequality} is
\begin{equation*}
 \sup_{\|g\|_q =1} |g(a)| (1 - |a|^2).
\end{equation*}
It is easy to show using H\"{o}lder's inequality that this supremum is equal to 
$(1 - |a|^2)^{\frac{1}{p}}$ (here, $p$ is the H\"{o}lder conjugate to $q$) and the extremal function is 
\[
g^*(z) = \left( \frac{1 - |a|^2}{(1 - \bar{a}z)^2}\right)^{1/q}.
\]
On the other hand, the right hand side of the inequality in Corollary \ref{dual_inequality} is
\begin{equation*}
 \sup_{\|g\|_q =1}
\left| g(a)(1-|a|^2) \frac{1 - \bar{b}a}{b-a} - g(b)(1-|b|^2)\frac{1 - \bar{a}b}{b-a} \right|.
\end{equation*}
Therefore, Corollary \ref{dual_inequality} states in this special case that the following strict inequality holds:
\begin{equation*}
(1 - |a|^2)^{1/p} < \sup_{\|g\|_q =1}
\left| g(a)(1-|a|^2) \frac{1 - \bar{b}a}{b-a} - g(b)(1-|b|^2)\frac{1 - \bar{a}b}{b-a} \right|.
\end{equation*}
Curiously, even for $p = q = 2$, we have not been able to find a direct proof of this elementary inequality for distinct arbitrary elements $a, b\in \D\setminus\{0\}$.
\end{Example}

The metric projections discussed in the present section can also be obtained via limits of optimal polynomial approximants, which we discuss now.

\begin{comment}
\begin{Corollary}\label{mp_norm}
Let $1 < p < \infty,$  $f = J F$, with $J$ inner and $F \in H^p$ outer, $\hat{J} = \overline{J(0)}J$, and $g_p^*$ be the metric projection of $1$ onto $[f]_p$, as in Theorem \ref{s_and_s}. Then $g_p^*$ has no additional inner factor besides $J.$
\end{Corollary}

\begin{proof}
From Theorem \ref{s_and_s}, we have that 
\begin{equation*}
\inf_{g \in [f]_p} \| 1 - g \|_p^p= \| 1 - g_p^* \|_p^p = 1 - |J(0)|^2,
\end{equation*}
with $g_p^* = J F^*,$ where $F^* \in H^p.$ We would like to show that $F^*$ is outer.  Suppose $F^* = J_1 G^*$ for some nontrivial inner function $J_1$ and $G^* \in H^p$ outer.  Then applying Theorem \ref{s_and_s} to the subspace $[g_p^*]$ and using the fact that $|J_1(0)| < 1$, we obtain
\begin{equation*}
\| 1 - g_p^* \|_p^p \geq \inf_{G \in [g^*]_p} \| 1 - G \|_p^p=  1 - |J(0)|^2 |J_1(0)|^2 > 1 - |J(0)|^2 = \| 1 - g_p^* \|_p^p,
\end{equation*}
which is a contradiction, and therefore the result is shown.
\end{proof}

\end{comment}

%see Section 2 of \cite[Section~2]{Cent}. 

\section{Optimal polynomial approximants in $H^p$}\label{opas}
For $n\in\N$, we denote by $\mathcal{P}_n$ the set of complex polynomials of degree at most $n$. Given $1<p<\infty$ and $f\in H^p$ with $f(0) \neq 0$, there exists a unique polynomial $q_{n,p}[f]\in\mathcal{P}_n$ such that
\[
\|1-q_{n,p}[f]f\|_p = \inf_{q\in\mathcal{P}_n}\|1-qf\|_p.
\]
We recall that these minimizing polynomials are called the \textit{optimal polynomial approximants} (OPAs) to $1/f$ in $H^p$. These polynomials have been extensively studied in various settings (for existence and uniqueness of OPAs in Hardy spaces see \cite[Section~2]{Cent}; see Section \ref{intro} for references pertaining to other relevant work). 

It is well known that if $f\in H^2$ with $f(0)\neq0$, then the OPAs to $1/f$ in $H^2$ cannot vanish in the closed unit disk. This result was first established in \cite{Chu1} and later reestablished in \cite{BKLSS_Lon}. On the other hand, a corresponding result for $p\neq2$ has been explored (see \cite{Cent, CCF, ChengFelder}) but has yet to be fully understood.  We begin with an alternative proof of this fact for $H^2$ which may be helpful in extending the result to the $H^p$ setting.

\begin{Proposition}\label{DI_p2}
Let $f \in H^2$ with $f(0) \neq 0$. If $J$ is any non-constant inner function, then there exists a constant $c = c_{f}\in\mathbb{T}$ such that 
\begin{equation*}
    \| 1 - J f \|_2 > \|1 - cf\|_2.
\end{equation*}

\end{Proposition}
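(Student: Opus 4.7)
The plan is to choose $c$ explicitly as the unimodular constant $c = \overline{f(0)}/|f(0)|$ (well-defined since $f(0)\neq 0$) and then verify the inequality by direct expansion of both $L^2$ norms. The whole argument rests on two features specific to $p=2$: the inner product expansion of $\|1-g\|_2^2$, and the fact that $\|Jf\|_2 = \|f\|_2$ whenever $|J|=1$ a.e.\ on $\mathbb{T}$.

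More precisely, I would expand
\[
\|1 - cf\|_2^2 = 1 - 2\operatorname{Re}(cf(0)) + |c|^2\|f\|_2^2,
\]
using $\langle f, 1\rangle = f(0)$. With $|c|=1$ and $c = \overline{f(0)}/|f(0)|$, one has $cf(0) = |f(0)|$, so this becomes $1 - 2|f(0)| + \|f\|_2^2$, which is in fact the minimum of $\|1-cf\|_2^2$ over $c\in\mathbb{T}$. Similarly, since $|J|=1$ a.e.\ on $\mathbb{T}$, we have $\|Jf\|_2 = \|f\|_2$, and $(Jf)(0) = J(0)f(0)$, giving
\[
\|1 - Jf\|_2^2 = 1 - 2\operatorname{Re}(J(0)f(0)) + \|f\|_2^2.
\]
Subtracting the two expressions reduces the required strict inequality to
\[
\operatorname{Re}(J(0)f(0)) < |f(0)|.
\]

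The final step is to invoke the maximum modulus principle (or, equivalently, that a non-constant inner function cannot attain modulus $1$ at an interior point): since $J$ is a non-constant inner function, $|J(0)|<1$, and therefore
\[
\operatorname{Re}(J(0)f(0)) \leq |J(0)f(0)| = |J(0)|\,|f(0)| < |f(0)|,
\]
which closes the argument. There is no real obstacle here; the proposition is really a direct computation exploiting the Hilbert space structure of $H^2$. The genuine difficulty, which the authors presumably wish to emphasize, is that this clean route fails for $p\neq 2$: neither does $\|1-cf\|_p^p$ admit such a simple expansion, nor does multiplication by $J$ preserve the $L^p$ norm of $f$ in a way that aligns neatly with a comparison at the single point $z=0$. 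Thus, while the $H^2$ case is essentially one line, the proposition is naturally set up as the template whose features one must find $H^p$ analogues of.
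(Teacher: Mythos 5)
Your argument is correct and is essentially identical to the paper's proof: both expand $\|1-Jf\|_2^2$ and $\|1-cf\|_2^2$ via the inner product, use $\|Jf\|_2=\|f\|_2$ and $(Jf)(0)=J(0)f(0)$, pick $c$ so that $cf(0)=|f(0)|$, and conclude from $|J(0)|<1$ (maximum principle for a non-constant inner function). No differences worth noting.
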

\begin{proof}
Observe that 
    \begin{align*}
    \| 1 - J f \|_2^2
    &= \int_\T\left| 1 - Jf \right|^{2}\, dm\\
    &= \int_\T (1 - Jf)\overline{(1 - Jf)}\, dm\\
    &= 1+\|f\|^2-2\operatorname{Re}(J(0)f(0))\\
    &\geq 1+\|f\|^2-2|J(0)f(0)|\\
    &>1+\|f\|^2-2|f(0)|,
\end{align*}
where the penultimate inequality holds because, by the Maximum Principle, $|J(0)|<1$.
Taking $c$ so that $cf(0)=|f(0)|$, a simple calculation, similar to that above, shows 
\begin{equation*}
    \| 1 - c f \|_2^2=1+\|f\|^2-2|f(0)|,
\end{equation*}
which completes the proof.
\end{proof}

A similar inequality holding in $H^p$ would allow us to deduce that OPAs in $H^p$ cannot vanish in the disk. 

\begin{Proposition}\label{DI}
Let $1 < p < \infty$. Suppose $f \in H^p$ with $f(0) \neq 0$ and let $J$ be any non-constant inner function with $J(0)\neq 0$. If there exists a constant $c \in\mathbb{C}\setminus\{0\}$ such that 
\begin{equation*}
    \| 1 - J f \|_p \geq \|1 - cf\|_p, 
\end{equation*}
then any non-trivial optimal polynomial approximant in $H^p$ cannot vanish in the open unit disk.  
\end{Proposition}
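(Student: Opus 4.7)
My plan is to argue by contradiction. Suppose $q := q_{n,p}[f]$ is a nontrivial OPA that vanishes at some point $a \in \D$. I begin by ruling out $a=0$: if $q(0)=0$, then the constant term of $1-qf$ equals $1$, so the elementary Hardy estimate $|g(0)| \leq \|g\|_p$ for $g \in H^p$ gives $\|1-qf\|_p \geq 1$. Since $0 \in \mathcal{P}_n$ achieves $\|1-0\cdot f\|_p = 1$, equality is forced, and uniqueness of the OPA in $H^p$ for $1<p<\infty$ then yields $q\equiv 0$, contradicting nontriviality. Hence every zero of $q$ inside $\D$ lies in $\D\setminus\{0\}$.

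Listing these zeros as $a_1,\ldots,a_k$ with multiplicity, I would factor $q(z) = r(z)\prod_{j=1}^{k}(z-a_j)$ where $r \in \mathcal{P}_{n-k}$ has $r(0) \neq 0$ and no zeros in $\D$. Introduce the finite Blaschke product $B(z) = \prod_{j=1}^{k}\frac{z-a_j}{1-\bar a_j z}$ and the ``reflected'' polynomial $q^*(z) = r(z)\prod_{j=1}^{k}(1-\bar a_j z)\in\mathcal{P}_n$. A direct computation shows $Bq^* = q$, so on $\D$ we have the factorization $qf = B \cdot (q^* f)$. Moreover, $B$ is non-constant and inner with $B(0) = \prod(-a_j) \neq 0$, and $F := q^*f \in H^p$ satisfies $F(0) = r(0)f(0) \neq 0$, so the hypothesis of the proposition may be applied to $F$ and $J = B$.

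Doing so produces some $c \in \C\setminus\{0\}$ with $\|1-BF\|_p \geq \|1-cF\|_p$, which in view of the factorization reads $\|1-qf\|_p \geq \|1-cq^* f\|_p$. Now $cq^* \in \mathcal{P}_n$, but its zero set inside $\D$ is empty (the reflected factors contribute zeros only at $1/\bar a_j$, which lie outside $\D$, and $r$ by construction has no zeros in $\D$), whereas $q$ vanishes at each $a_j \in \D$; hence $cq^* \neq q$. Uniqueness of the OPA in $H^p$ then forces the strict inequality $\|1-cq^* f\|_p > \|1-qf\|_p$, contradicting the inequality obtained from the hypothesis.

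The \emph{main obstacle} is not in this implication but in verifying the hypothesis $\|1-Jf\|_p \geq \|1-cf\|_p$ itself: for $p=2$ it is the content of Proposition \ref{DI_p2} and follows from the inner product identity, but for $p\neq 2$ the absence of a usable Pythagorean equality turns it into a delicate extremal inequality, presumably requiring the Birkhoff--James machinery of Section \ref{BJ} together with the explicit metric projection formula of Section \ref{MetP}. Within the implication itself, the only subtle point is the zero-at-the-origin case, handled separately above because the proposition's hypothesis demands $J(0)\neq 0$.
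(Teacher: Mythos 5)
Your proof is correct and follows essentially the same route as the paper's: factor the OPA as a finite Blaschke product (formed from its zeros in $\D$) times a zero-free polynomial, apply the hypothesized inequality with that Blaschke product and the zero-free part times $f$, and invoke uniqueness/minimality of the OPA to reach a contradiction. Your separate treatment of a possible zero at the origin is a detail the paper glosses over, and it is indeed needed so that the Blaschke product plays the role of $J$ with $J(0)\neq 0$ as the hypothesis requires.
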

\begin{proof}
Let $g \in H^p$ with $g(0) \neq 0$. Without loss of generality, assume that the degree of $q_{n,p}[g]$ is $n$. If $q_{n,p}[g]$ has zeros in $\D,$ we write $q_{n,p}[g] = B_k p_n,$ where $B_k$ is a Blaschke product of degree $k$ with $1 \leq k \leq n $ and $p_n$ is a polynomial of degree at most $n$ with no zeros in the disk. Then, by hypothesis,
\begin{equation*}
    \|1 - q_{n,p}[g]g \|_p = \|1 - B_kp_ng\|_p \geq \|1-cp_ng\|_p
\end{equation*}
for some constant $c\in\mathbb{C}$. However, by minimality of $q_{n,p}[g]$ and since $cp_n$ has degree at most $n$, we get $q_{n,p}[g] = cp_n$, which is a contradiction since $p_n$ has no zeros in the disk.
\end{proof}

\iffalse
\chris{
This is just the above proposition, rewritten with the weakest hypotheses needed to establish an analogue of Theorem \ref{DI_p2}.
\begin{Proposition}
%REMEMBER TO LABEL THIS IF WE USE IT
Let $1 < p < \infty$. Suppose $f \in H^p$ with $f(0) \neq 0$ and let $J$ be any non-constant inner function with $J(0)\neq 0$. If there exists a constant $c \in\mathbb{C}\setminus\{0\}$ such that 
\begin{equation*}
    \| 1 - J f \|_p \geq \|1 - cf\|_p, 
\end{equation*}
then any non-trivial optimal polynomial approximant in $H^p$ cannot vanish in the open unit disk.  
\end{Proposition}
\begin{proof}
Let $g \in H^p$ with $g(0) \neq 0$ and suppose for contradiction that $q_{n,p}[g]$ has a zero at $\alpha \in \D$. Put $q_{n,p}[g] = B\tilde{p}$, where $B$ is a Blaschke factor vanishing at $\alpha$ and $\tilde{p}$ is a polynomial of degree at most $n$. 
Then, by hypothesis, we have 
\[
\|1-c\tilde{p}g \| \ge \|1 - q_{n,p}[g]g \|_p = \|1 - B\tilde{p}g\|_p \ge \|1-c\tilde{p}g\|_p, 
\]
which, by minimality of $q_{n,p}[g]$, is a contradiction.
\end{proof}
}
\fi

%The previous two results lead us to conjecture the following, which would imply that optimal polynomial approximants in $H^p$ are non-vanishing in $\D$ in view of Proposition \ref{DI}.

In light of Proposition \ref{DI_p2}, we conjecture the following: 

\begin{Conjecture}
Let $1 < p < \infty$ and $f \in H^p$ with $f(0) \neq 0$. If $J$ is any non-constant inner function, then there exists a constant $c = c_{f}\in\mathbb{T}$ such that 
\[
\| 1 - J f \|_p > \|1 - cf\|_p.
\]
\end{Conjecture}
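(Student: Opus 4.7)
The natural strategy is to emulate the $H^2$ argument of Proposition~\ref{DI_p2} by testing the universal candidate $c_f := \overline{f(0)}/|f(0)| \in \T$, which makes $c_f f(0) = |f(0)|$ real and positive. In $H^2$, Parseval's identity collapses $\|1-Jf\|_2^2 - \|1-c_f f\|_2^2$ to $2(|f(0)|-\operatorname{Re}(J(0)f(0)))$, which is strictly positive by the Maximum Principle $|J(0)|<1$. No such exact identity is available when $p \ne 2$, so the plan is to replace Parseval by the Birkhoff--James integral criterion (Theorem~\ref{praccritperp}) together with the Pythagorean inequalities of Theorem~\ref{pythagthm}.

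First, using Lemma~\ref{FBPapprox} and continuity of $J\mapsto \|1-Jf\|_p$ on $H^p$, one reduces to the case that $J$ is a finite Blaschke product. Next, I would introduce the $0$-th optimal polynomial approximant $c^*$, the unique minimizer of $\|1-cf\|_p$ over $c \in \C$; by Theorem~\ref{praccritperp}, $(1-c^*f)\perp_p f$, and hence $(1-c^*f)\perp_p cf$ for every $c \in \C$. Decomposing
\[
1 - Jf \;=\; (1-c^* f)\;-\;(J-c^*)f
\]
and writing $J - c^* = (J(0)-c^*) + z\tilde{J}$ with $\tilde{J}\in H^\infty$, the scalar portion $(J(0)-c^*)f$ is annihilated by the orthogonality of $(1-c^* f)$ to $f$. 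A lower Pythagorean inequality \eqref{lower1}--\eqref{lower2} applied to the decomposition above should then yield an estimate of the form
\[
\|1-Jf\|_p^s \;\ge\; \|1-c^*f\|_p^s \;+\; K\,\|z\tilde{J}f\|_p^s
\]
for the appropriate exponent $s\in\{p,2\}$ and a constant $K>0$; since $J$ is non-constant, $\tilde{J}\not\equiv 0$ and one obtains strict inequality against $c^*$.

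Finally, I would compare $c^*$ with the candidate $c_f$ directly. Theorem~\ref{s_and_s}, applied to $[Jf]_p$, supplies the explicit metric projection of $1$ onto that subspace as an outer multiple of the inner factor of $Jf$; in particular, the projection is never a constant multiple of $f$, which provides the rigidity needed to upgrade ``$\ge$'' to ``$>$''. A short direct computation, using $|c_f|=1$ (so $\|c_f f\|_p = \|f\|_p$) and $c_f f(0)=|f(0)|$, should then convert the inequality against $c^*$ into one against the universal constant $c_f$.

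The principal obstacle is that Birkhoff--James orthogonality in $H^p$ is asymmetric and the Pythagorean inequalities are one-sided, so the multiplicative constants $p-1$ and $1/(2^{p-1}-1)$ may not suffice to beat the full defect $|f(0)|-\operatorname{Re}(J(0)f(0))$ uniformly over all inner $J$. A further subtlety is that $c^*$ need not coincide with $c_f$, so the Pythagorean step produces an inequality only against $c^*$ that must be promoted to one against a universal $c_f$ independent of $J$. Extracting the right universal $c_f$ --- which is what would make this a genuine analogue of Proposition~\ref{DI_p2} --- is what I expect to be the true difficulty.
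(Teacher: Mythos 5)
The statement you are attempting is presented in the paper as an open \emph{Conjecture}: the authors prove only the case $p=2$ (Proposition \ref{DI_p2}), where Parseval's identity gives the exact formula $\|1-Jf\|_2^2=1+\|f\|_2^2-2\operatorname{Re}(J(0)f(0))$, together with a perturbative statement for $p$ near $2$ (Proposition \ref{zero-free}). There is therefore no proof in the paper to compare yours against, and your proposal, as written, does not close the problem.

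The decisive gap is the Pythagorean step. Writing $1-Jf=(1-c^*f)-(J(0)-c^*)f-z\tilde{J}f$, the lower Pythagorean inequality of Theorem \ref{pythagthm} requires $(1-c^*f)\perp_p z\tilde{J}f$. But the zeroth optimal approximant $c^*$ only gives $(1-c^*f)\perp_p cf$ for scalars $c$, i.e.\ orthogonality to the one-dimensional space $\C f$, not to $zH^\infty f$. The function of the form $1-g$, $g\in[f]_p$, that is Birkhoff--James orthogonal to every $z^kf$, $k\ge 0$, is $1-g_p^*$ with $g_p^*$ the metric projection of Theorem \ref{s_and_s}, and for $p\ne 2$ that projection is a non-constant outer function times the inner part of $f$ --- it is not of the form $c^*f$. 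So the inequality $\|1-Jf\|_p\ge\|1-c^*f\|_p$ that this step is meant to produce is not justified; nor can it be obtained by comparing minimizations, since $Jf\in[f]_p$ implies only that the metric projection onto $[f]_p$ does at least as well as $c^*f$, which is an inequality in the wrong direction. Two further issues: reducing to finite Blaschke products via Lemma \ref{FBPapprox} preserves only the weak inequality in the limit, so strictness for general inner $J$ would have to be re-derived; and the final promotion from $c^*$ (which at least depends only on $f$) to a unimodular constant $c_f$ is left entirely open --- as you yourself acknowledge, this is where the real difficulty lies. The proposal is a sensible map of the obstructions, but it is not a proof.
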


By continuity, we can at least say that the desired inequality still holds for values of $p$ which are close to two. In particular, using \cite[Lemma~3.1.1]{ChengFelder}, we obtain the following:
\begin{Proposition}\label{zero-free}
    Suppose $f \in H^{\infty}$ and $f(0) \neq 0$. Then there exists a neighborhood of $p=2$ such that the optimal polynomial approximants to $1/f$ in $H^p$ do not vanish in $\D$.
\end{Proposition}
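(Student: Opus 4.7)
The plan is to combine the baseline strict inequality at $p=2$ provided by Proposition \ref{DI_p2} with the continuity control of \cite[Lemma 3.1.1]{ChengFelder}, and then invoke the sufficient criterion of Proposition \ref{DI}. Tracking the proof of Proposition \ref{DI_p2} yields not merely strictness but the quantitative bound $\|1-Jh\|_2^2 - \|1 - ch\|_2^2 \ge 2|h(0)|(1-|J(0)|)$ for every $h \in H^2$ with $h(0) \neq 0$, every non-constant inner function $J$, and the unimodular constant $c = |h(0)|/h(0)$. Applied in the form required by the proof of Proposition \ref{DI} --- with $h = p_n f$, where $q_{n,p}[f] = B_k p_n$ is a hypothetical factorization of an OPA having at least one zero in $\D$ --- this delivers the needed strict inequality at $p=2$.

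Next, \cite[Lemma 3.1.1]{ChengFelder} provides the continuity of $p \mapsto \|\cdot\|_p$ on $H^\infty$ required to transfer the inequality to values of $p$ near $2$. The hypothesis $f \in H^\infty$ is crucial: it ensures that the functions $1 - B_k p_n f$ and $1 - c p_n f$ lie in $H^\infty$, so that their $L^p$-norms vary continuously in $p$ with modulus of continuity controlled by their $H^\infty$-norms. For each fixed $n$ and fixed Blaschke factor $B_k$, this continuity alone suffices to propagate the strict inequality at $p=2$ to an open interval around $2$, and Proposition \ref{DI} then rules out zeros of $q_{n,p}[f]$ in $\D$ for $p$ in that interval.

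The main obstacle will be ensuring that the neighborhood of $p=2$ can be chosen independent of $n$ (and of the specific $B_k$, $p_n$ arising for each $n$). To overcome this, I plan to exploit the uniform bound $\|q_{n,p}[f] f\|_p \le 2$, immediate from minimality since $q\equiv 0$ is a candidate giving $\|1\|_p = 1$, together with $\|f\|_\infty < \infty$, in order to confine the relevant data $p_n f$ to a family on which \cite[Lemma 3.1.1]{ChengFelder} supplies estimates that are uniform. The quantitative strength of the gap $2|h(0)|(1-|J(0)|)$ at $p=2$ can be matched against the uniform continuity modulus, producing a single neighborhood of $p=2$ on which the strict inequality survives for all admissible $(n, B_k, p_n)$. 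A final appeal to Proposition \ref{DI} completes the argument.
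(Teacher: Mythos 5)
Your route is genuinely different from the paper's, and it contains a gap I do not see how to close. The paper's own argument is far more direct and bypasses Propositions \ref{DI_p2} and \ref{DI} entirely: by \cite[Lemma~3.1.1]{ChengFelder}, for each fixed $n$ the polynomials $q_{n,p}[f]$ converge to $q_{n,2}[f]$ \emph{uniformly on $\overline{\D}$} as $p\to2$ (this is where $f\in H^\infty$ is used); since $q_{n,2}[f]$ is zero-free on $\overline{\D}$, its minimum modulus there is positive, and uniform convergence immediately forces $q_{n,p}[f]$ to be zero-free on $\overline{\D}$ for $p$ in a neighborhood of $2$ depending on $n$ and $f$. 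No norm inequality is required.

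The gap in your argument is that the surplus $2|h(0)|(1-|J(0)|)$ extracted from Proposition \ref{DI_p2} is not bounded below over the family of factorizations you must handle. To invoke Proposition \ref{DI} you need $\|1-B_kp_nf\|_p\ge\|1-cp_nf\|_p$ for \emph{every} hypothetical factorization coming from a zero $w\in\D$ of $q_{n,p}[f]$, including $|w|$ arbitrarily close to $1$; for such a zero $|B_k(0)|\to1$, so the surplus at $p=2$ tends to $0$, while the error incurred in replacing $\|\cdot\|_2$ by $\|\cdot\|_p$ is of size comparable to $|p-2|$ with constants governed by the (non-degenerating) $H^\infty$ norms of $1-B_kp_nf$ and $1-cp_nf$. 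Hence for any fixed $p\neq2$ the comparison fails once the hypothetical zero is close enough to $\T$: your method excludes zeros only from a disk $\{|z|\le 1-\delta\}$ with $\delta$ of order $|p-2|$, not from all of $\D$ --- and Proposition \ref{esc} shows that any zeros occurring for large $n$ would lie exactly in that near-boundary regime. The uniformity in $n$ you aim for is also unsupported: $\|q_{n,p}[f]f\|_p\le2$ gives no control on $\|p_nf\|_\infty$ (a polynomial multiple of $f$ of degree $n$ can have large sup norm while $\|q_{n,p}[f]f\|_p$ stays bounded), which is precisely what the modulus of continuity in $p$ of the relevant norms depends on; note the paper's statement is only established with a neighborhood depending on $n$ and $f$.
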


\begin{Remark}
    We note that, using the continuity properties of the optimal polynomial approximants, we can replace $\mathbb{D}$ with $\overline{\mathbb{D}}$ in the previous result. Indeed, under the assumptions of Proposition \ref{zero-free}, we know that, for any $n\in\N$, the polynomials $q_{n,p}[f]$ converge to $q_{n,2}[f]$ as $p\to2$ uniformly on $\overline{\D}$ (see Lemma 3.1.1 in \cite{ChengFelder}). Noting that $q_{n,2}[f]$ does not vanish on $\overline{\D}$ establishes the claim.
\end{Remark}

\subsection{Behavior of OPAs and bounds on their roots}

 Using the results of the previous sections, we can say more about the roots of optimal polynomial approximants in $H^p$. First, we get that the roots always escape any compact subset of $\mathbb{D}$ as $n$ increases ad infinitum, which is an improvement of Proposition 5.1 in \cite{CCF}.
\begin{Proposition}\label{esc}
    Let $1 < p < \infty$ and $f \in H^p$ with $f(0) \neq 0$. Then, for any compact subset $K$ of $\mathbb{D}$, there exists $N\in\N$ such that the roots of $q_{n,p}[f]$ lie outside $K$ for all $n\geq N$.
\end{Proposition}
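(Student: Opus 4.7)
The plan is to exploit the explicit formula for the limit of $p_n f$ furnished by Theorem \ref{s_and_s}, upgrade it to locally uniform convergence of the OPAs themselves to a zero-free analytic function, and then invoke Hurwitz's theorem. Write $p_n := q_{n,p}[f]$ and factor $f = J F$ with $J$ inner (possibly constant), $F$ outer, and $J(0) \neq 0$. As noted in Section \ref{intro}, $p_n f \to g_p^*$ in $H^p$, where $g_p^*$ is the metric projection of $1$ onto $[f]_p = [J]_p$. By Theorem \ref{s_and_s}, $g_p^* = J H$ for some outer function $H$, so the quotient
\[
h \; := \; \frac{g_p^*}{f} \; = \; \frac{H}{F}
\]
is analytic and zero-free on $\mathbb{D}$.

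The central step is to pass from the convergence $p_n f \to g_p^*$ to $p_n \to h$, locally uniformly on $\mathbb{D}$. Convergence in $H^p$ already gives uniform convergence of $p_n f$ to $g_p^* = h f$ on any compact subset of $\mathbb{D}$, but dividing by $f$ directly is obstructed by its zeros. Given a compact $K \subset \mathbb{D}$, choose $r \in (0,1)$ with $K \subset \{|z| < r\}$ and such that $f$ is nonvanishing on $\{|z| = r\}$; such $r$ exist because $f$ has only countably many zeros in $\mathbb{D}$. On $\{|z| = r\}$, $|f|$ is bounded below by some $\delta > 0$, and therefore
\[
|p_n - h| \; = \; \frac{|p_n f - g_p^*|}{|f|} \; \leq \; \frac{1}{\delta} \sup_{|z| = r} |p_n f - g_p^*| \; \longrightarrow \; 0.
\]
Since $p_n - h$ is analytic on a neighborhood of $\{|z| \leq r\}$, the maximum principle promotes this to uniform convergence of $p_n$ to $h$ on $\{|z| \leq r\}$, and in particular on $K$.

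Once this local uniform convergence is established, the conclusion is immediate from Hurwitz's theorem: because $h$ is analytic and nonvanishing on $\mathbb{D}$, for any compact $K \subset \mathbb{D}$ there exists $N \in \mathbb{N}$ such that $p_n$ has no zeros in $K$ for all $n \geq N$. The only real obstacle in this plan is the upgrade from $p_n f \to g_p^*$ to $p_n \to h$; the zeros of the inner factor of $f$ inside $\mathbb{D}$ prevent a naive pointwise division, and the maximum-principle trick on a circle avoiding those zeros is what makes the passage work.
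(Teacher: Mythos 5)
Your proof is correct and follows essentially the same route as the paper's: both start from the convergence $q_{n,p}[f]f \to g_p^*$ in $H^p$, use Theorem \ref{s_and_s} to see that $g_p^*$ carries no inner factor beyond $J$, and conclude with Hurwitz's theorem. The only difference is in how the common factor is removed before applying Hurwitz: the paper divides out the inner factor $J$ isometrically in the $H^p$ norm (since $|J|=1$ a.e.\ on $\T$) to get $q_{n,p}[f]F \to G$ with $G$ outer, whereas you divide by all of $f$ on a circle avoiding its zeros and upgrade via the maximum principle --- both correctly produce locally uniform convergence to a zero-free analytic function.
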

\begin{proof}
    Let $f = J F$, with $J$ inner and $F$ outer, and let $g_p^*$ be the metric projection of 1 on $[f]_p$ as given in Theorem \ref{s_and_s}. We know that $q_{n,p}[f]f\to g_p^*$ in $H^p$ (see for example Proposition 3.0.1 in \cite{ChengFelder}). Since the inner factor of $g_p^*$ is $J$, we also get that the sequence $q_{n,p}[f]F$ converges to the outer part of $g_p^*$ in $H^p$ which of course does not vanish in $\mathbb{D}$. The result follows by Hurwitz's Theorem.
\end{proof}
In \cite{Cent}, Centner gave the following lower bound: let $1 < p < \infty,f \in H^p$ with $f(0) \neq 0$ and $n\in\mathbb{N}$, then any root of $q_{n,p}[f]$ must lie outside the open disc of radius $(1-\|1-q_{n,p}[f]f\|_p^p)^{1/2}$ centered at the origin (see \cite[Proposition 5.1]{Cent}). We next provide an improvement.
\begin{Proposition}\label{opa_roots_prop}
Let $1 < p < \infty$ and $n\in\N$. Suppose that $f \in H^p$ has inner part $J$, with $J(0) \neq 0$. Let also $w_1,\dots,w_k$ be the roots (counting multiplicities) of $q_{n,p}[f]$ in $\mathbb{D}$. Then
\begin{equation}\label{lower_bound_zeros}
|w_1\cdots w_k| \geq \frac{(1 - \|1 - q_{n,p}[f]f\|_p^p)^{1/2}}{|J(0)|}.
\end{equation}
\end{Proposition}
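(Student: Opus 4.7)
The plan is to exploit Theorem \ref{s_and_s}, which pins down the distance from $1$ to any $z$-invariant subspace in terms of the inner factor of its generator. Since $q_{n,p}[f]\,f$ lies in the $z$-invariant subspace it generates, the quantity $\|1 - q_{n,p}[f]\,f\|_p$ is bounded below by the distance from $1$ to that subspace, and the bound \eqref{lower_bound_zeros} will fall out of this comparison.

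First I would factor. Write $f = J F$ with $F$ outer, and take the inner--outer factorization of the polynomial $q_{n,p}[f]$ viewed as an element of $H^p$: it splits as $B_q \cdot O_q$, where $B_q$ is the finite Blaschke product whose zeros (counted with multiplicity) are exactly $w_1, \dots, w_k$, and $O_q$ is an outer polynomial collecting the zeros off $\mathbb{D}$. Multiplying, $q_{n,p}[f]\,f$ has inner factor $J B_q$ and outer factor $F O_q$, so $[q_{n,p}[f]\,f]_p = [J B_q]_p$ by Beurling's theorem.

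Next I would apply Theorem \ref{s_and_s} to the subspace $[JB_q]_p$ (whose generating inner function $JB_q$ does not vanish at the origin), which gives
\[
\operatorname{dist}_{H^p}\!\bigl(1, [JB_q]_p\bigr) = \bigl(1 - |J(0)|^2\,|B_q(0)|^2\bigr)^{1/p}.
\]
Since $q_{n,p}[f]\,f$ belongs to this subspace, $\|1 - q_{n,p}[f]\,f\|_p$ is at least this distance. Raising both sides to the $p$-th power, rearranging, and using $|B_q(0)| = |w_1 \cdots w_k|$ yields exactly \eqref{lower_bound_zeros}.

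The argument is essentially a one-line consequence of Theorem \ref{s_and_s} once the factorization has been set up, so there is no serious obstacle. The main (mild) point requiring care is confirming that the Blaschke product $B_q$ built from the interior zeros of the polynomial $q_{n,p}[f]$ really is its inner factor in $H^p$, so that zeros on $\partial\mathbb{D}$ or outside $\overline{\mathbb{D}}$ do not contaminate the evaluation of $|B_q(0)|$; this is immediate from the standard inner--outer factorization of a polynomial.
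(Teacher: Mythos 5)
Your proposal is correct and follows essentially the same route as the paper: the paper likewise splits $q_{n,p}[f]$ into the Blaschke product $B_k$ on its zeros in $\D$ times a zero-free (hence outer) polynomial, and then bounds $\|1-q_{n,p}[f]f\|_p^p$ below by $\inf_{\varphi\in H^p}\|1-B_kJ\varphi\|_p^p = 1-|w_1\cdots w_k J(0)|^2$ using the distance formula of Theorem \ref{fstarnozero_0}. The only cosmetic difference is that you invoke the distance formula via Theorem \ref{s_and_s} rather than Theorem \ref{fstarnozero_0}; the content is identical.
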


\begin{proof}
We have that $q_{n,p}[f]=B_kp_n$ where $B_k$ is the Blaschke product formed by the roots $w_1,\dots,w_k$ and $p_n$ is a polynomial of degree at most $n$ that does not vanish in $\D$. Then, by Theorem \ref{fstarnozero_0}, we have
\begin{align*}
    \|1 - q_{n,p}[f]f\|_p^p
    &= \|1 - B_kp_nf\|_p^p \\
    &\geq\inf_{\varphi \in H^p} \left\| 1 - B_k J\varphi \right\|_p^p\\
    &= 1 - |w_1\cdots w_k J(0)|^2.
\end{align*}
Rearranging the terms of the inequality yields the claim.
\end{proof}
\begin{Remark}
    Note that since 
    \begin{equation*}
       \lim_{n \rightarrow \infty} \|1 - q_{n,p}[f]f\|_p^p = \operatorname{dist}_{H^p}^p(1, [f]_p) = 1 - |J(0)|^2,        
    \end{equation*}
   the right-hand side of \eqref{lower_bound_zeros} converges to 1 as $n$ increases to $\infty$. Thus, Proposition \ref{opa_roots_prop} can be viewed as a quantitative version of Proposition \ref{esc}.
\end{Remark}
Next, we give bounds for the roots of optimal polynomial approximants that depend on whether $p$ is greater or less than two.

\begin{Proposition}
Let $1 < p < 2$ and $n\in\mathbb{N}$. Suppose that $f \in H^p$ has inner part $J$, with $J(0) \neq 0$. Let also $w_1,\dots,w_k$ be the roots (counting multiplicities) of $q_{n,p}[f]$ in $\mathbb{D}$, then
\[
|w_1\cdots w_k| \geq \frac{\left(1 - \left[1 - \frac{|f(0)|^2}{\|f\|_2^2}\right]^{p/2} \right)^{1/2}}{|J(0)|}.
\]
In particular, if $f = J$ is inner, then
\[
|w_1\cdots w_k| \geq \frac{\left(1 - \left[1 - |J(0)|^2 \right]^{p/2} \right)^{1/2}}{|J(0)|}.
\]
\end{Proposition}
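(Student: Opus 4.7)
The plan is to combine the lower bound provided by Proposition \ref{opa_roots_prop},
\[
|w_1\cdots w_k| \geq \frac{\bigl(1 - \|1 - q_{n,p}[f]f\|_p^p\bigr)^{1/2}}{|J(0)|},
\]
with a carefully chosen upper bound on the residual norm $\|1 - q_{n,p}[f]f\|_p^p$. The restriction $p<2$ will be used to replace the $L^p$ norm by the (larger) $L^2$ norm on the circle, which puts us in Hilbert space territory where an explicit best-constant approximation is available.

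More concretely, I would proceed as follows. Since constants lie in $\mathcal{P}_n$ for every $n\geq 0$, the minimality of the OPA gives
\[
\|1 - q_{n,p}[f]f\|_p \leq \|1 - cf\|_p \qquad \text{for every } c \in \mathbb{C}.
\]
Assuming first that $f \in H^2$, Jensen's inequality applied to the concave function $t \mapsto t^{p/2}$ on the probability space $(\mathbb{T}, dm)$ yields $\|g\|_p \leq \|g\|_2$ for any $g \in L^2$, so $\|1 - cf\|_p \leq \|1 - cf\|_2$. Minimizing the right-hand side over $c \in \mathbb{C}$ is a one-dimensional Hilbert space projection: the optimum is attained at $c = f(0)/\|f\|_2^2$, giving $\|1 - cf\|_2^2 = 1 - |f(0)|^2/\|f\|_2^2$. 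Raising to the $p$-th power and substituting into Proposition \ref{opa_roots_prop} produces
\[
|w_1\cdots w_k| \geq \frac{\Bigl(1 - \bigl(1 - |f(0)|^2/\|f\|_2^2\bigr)^{p/2}\Bigr)^{1/2}}{|J(0)|},
\]
which is precisely the asserted inequality. The case $f \notin H^2$ yields a vacuous bound (the numerator on the right vanishes with the convention $|f(0)|^2/\|f\|_2^2 = 0$), so there is nothing further to verify. The special case $f = J$ inner is immediate: then $\|J\|_2 = 1$ and $|f(0)|=|J(0)|$, so the general formula collapses to the stated one.

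I do not foresee any substantive obstacle; the proof is a short chain of classical estimates. The one subtlety worth flagging is the direction of the $L^p$-versus-$L^2$ comparison: the inequality $\|\cdot\|_p \leq \|\cdot\|_2$ on a probability space holds only for $p \leq 2$, and this is precisely what confines the argument to the range $1 < p < 2$. A complementary bound valid for $p>2$ would require the reverse inequality and hence a genuinely different approach.
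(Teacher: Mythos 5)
Your proof is correct and follows essentially the same route as the paper: both combine Proposition \ref{opa_roots_prop} with the residual bound $\|1 - q_{n,p}[f]f\|_p \leq \bigl(1 - |f(0)|^2/\|f\|_2^2\bigr)^{1/2}$, which the paper simply cites from \cite{ChengFelder} while you supply a self-contained derivation via the $L^p$-versus-$L^2$ comparison and the one-dimensional Hilbert space projection onto $\operatorname{span}\{f\}$ (note only that the optimal constant there is $c=\overline{f(0)}/\|f\|_2^2$, a conjugate that does not affect the minimum value). Your handling of the vacuous case $f\notin H^2$ and of the inner specialization $f=J$ is also fine.
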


\begin{proof}
We combine Proposition \ref{opa_roots_prop} with the note following Proposition 4.0.15 in \cite{ChengFelder}, which says that
\[
\|1 - q_{n,p}[f]f\|_p \leq \left(1 - \frac{|f(0)|^2}{\|f\|_2^2} \right)^{1/2}.
\]
\end{proof}
For the case $2 < p < \infty$, we first need the following lemma: 
\begin{Lemma}\label{0_opa}
    Let $1<p<\infty$, and let $f \in H^p$.  Then
    \[
           \|1-q_{0,p}[f]f\|_p < 1
    \]
    if and only if $f(0) \neq 0$. In this case,
    \[
    \|1-q_{0,p}[f]f\|_p^r  \leq \frac{A^{r/(r-1)}}{(1 + A^{1/(r-1)})^r} + \frac{A}{(1 + A^{1/(r-1)})^r} < 1,
     \]
     where $K$ and $r$ are the relevant upper Pythagorean parameters and
     \[
         A:= K\bigg\| \frac{f-f(0)}{zf(0)}  \bigg\|^r_p
     \]
\end{Lemma}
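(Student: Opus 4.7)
The plan is to handle the equivalence and the quantitative bound in three steps. For the easy direction of the ``if and only if,'' if $f(0) = 0$ then for every scalar $c$ the function $1 - cf$ lies in $H^p$ with $(1-cf)(0) = 1$, so the standard mean-value inequality $\|h\|_p \geq |h(0)|$ for $h \in H^p$ (a consequence of subharmonicity of $|h|^p$) gives $\|1-cf\|_p \geq 1$; taking the infimum over constants yields $\|1 - q_{0,p}[f]f\|_p \geq 1$, with equality realized by $c=0$. The substance of the lemma therefore lies in the converse implication together with the explicit bound.

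Suppose now $f(0) \neq 0$. The key idea is the decomposition $f = f(0) + zg$, where $g := (f - f(0))/z \in H^p$, so that for each scalar $c$ we can write $1 - cf = \alpha + \beta$ with $\alpha := 1 - cf(0)$ (a constant) and $\beta := -czg$. I would then verify $\alpha \perp_p \beta$ in $L^p$: by Theorem \ref{praccritperp}, since $|\alpha|^{p-2}\overline{\alpha}$ is a constant, this reduces to checking $\int_\T \beta\, dm = 0$, which is immediate from the $H^1$ mean-value property because $(zg)(0) = 0$. With orthogonality in hand, the relevant upper Pythagorean inequality from Theorem \ref{pythagthm} (with the parameters $K$ and $r$ appropriate to $p \in (1,2]$ or $p \in [2,\infty)$) gives
\begin{equation*}
\|1 - cf\|_p^r \;\leq\; |1 - cf(0)|^r \;+\; K\,|c|^r\,\|g\|_p^r.
\end{equation*}

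To finish, I would specialize to $c = t/f(0)$ with $t \in [0,1]$ (the optimal phase, since rotating $c$ to make $cf(0)$ real and non-negative minimizes $|1-cf(0)|$ for fixed $|c|$); this collapses the right-hand side to $(1-t)^r + A t^r$, with $A$ exactly as in the statement. A routine single-variable calculation produces the unique critical point $t^\ast = 1/(1 + A^{1/(r-1)})$ and the corresponding value $(A^{r/(r-1)} + A)/(1 + A^{1/(r-1)})^r$; since $q_{0,p}[f]$ minimizes over \emph{all} constants, this value is itself an upper bound for $\|1-q_{0,p}[f]f\|_p^r$. Writing $B := A^{1/(r-1)}$, the minimum value simplifies to $\bigl(B/(1+B)\bigr)^{r-1}$, which is strictly less than $1$ because $r > 1$, completing the proof. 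The only non-routine step is the Birkhoff-James orthogonality check, but it is immediate from the integral criterion and the vanishing of $zg$ at the origin; the rest is a mechanical application of Theorem \ref{pythagthm} and one-variable optimization.
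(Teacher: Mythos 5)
Your proposal is correct and follows essentially the same route as the paper's proof: decompose $1-cf$ into the constant $1-cf(0)$ plus $-czg$ with $g=(f-f(0))/z$, verify Birkhoff--James orthogonality via the integral criterion, apply the upper Pythagorean inequality, and optimize the resulting one-variable expression $(1-t)^r+At^r$ at $t^*=1/(1+A^{1/(r-1)})$. The only (cosmetic) difference is your final verification that the minimum is below $1$, via the clean identity that it equals $\bigl(B/(1+B)\bigr)^{r-1}$ with $B=A^{1/(r-1)}$, where the paper instead checks the equivalent inequality $B^{r-1}+B^r<(1+B)^r$ directly.
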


\begin{proof}
There is no harm in first assuming $f(0) = 1$, so suppose $f = 1 + z\phi \in H^p$, with $\phi$ nonzero.  Then for $0<c\leq 1$, we have
\[
     \int_{\mathbb{T}} (1-c)^{\langle p-1\rangle} z\phi\, dm = 0.
\]
Consequently, $(1-c) \perp_p z\phi$ holds, and by Theorem \ref{pythagthm} we have
\begin{align}
    1 - cf(z) &=  (1 - c) +  (-cz\phi(z)) \nonumber \\
    \|1 - cf\|_p^r  &\leq (1-c)^r + K\|cz\phi\|_p^r  \nonumber \\
         &= (1-c)^r + Kc^r \|\phi\|_p^r, \label{upbdforerr}
\end{align}
where $r$ and $K$ are the applicable Pythagorean parameters.

Write $A := K\|\phi\|_p^r$. By elementary calculus, the expression
\[
       (1-c)^r + Ac^r 
\]
is critical when 
\begin{align*}
     0 &= r(1-c)^{r-1}(-1) + rAc^{r-1} \\ 
     (1-c)^{r-1} &= Ac^{r-1} \\
       c  &=  \frac{1}{1 + A^{1/(r-1)}}.
\end{align*}
Thus the expression \eqref{upbdforerr} takes the minimum value
\[
     \frac{A^{r/(r-1)}}{(1 + A^{1/(r-1)})^r} + \frac{A}{(1 + A^{1/(r-1)})^r},
\]
and hence it suffices to show this quantity is less than 1.

Indeed, it is elementary to see that for any $B>0$ we have
\begin{align*}
   B^r\Big( 1 + \frac{1}{B}\Big)  &<  B^r\Big( 1 + \frac{1}{B}\Big)^r \\
   B^{r-1} + B^r  &<  (1 + B)^r  \\
   \frac{B^{r-1} + B^r}{(1 + B)^r } &< 1.
\end{align*}
Substituting $B = A^{1/(r-1)}$, we conclude
\[
    \|1 - cf\|_p^r  \leq \frac{A^{r/(r-1)}}{(1 + A^{1/(r-1)})^r} + \frac{A}{(1 + A^{1/(r-1)})^r} < 1.
\]
More generally, for $f(0)\neq 0$, apply the above argument to $f(z)/f(0)$, and make the corresponding change to the definition of $A$.

Conversely, if $f(0)=0$, then the Mean Value Property for subharmonic functions immediately yields the minimal choice  $c=0$.

\end{proof}
\begin{Proposition}
Let $2 < p < \infty$ and $n\in\mathbb{N}$. Suppose that $f \in H^p$ has inner part $J$, with $J(0) \neq 0$. Let also $w_1,\dots,w_k$ be the roots (counting multiplicities) of $q_{n,p}[f]$ in $\mathbb{D}$, then
\[
|w_1\cdots w_k| \geq \frac{1}{|J(0)|} \left(1 - \left( \frac{(p - 1)\|f - f(0)\|_p^2}{|f(0)|^2 + (p - 1)\|f - f(0)\|_p^2} \right)^{p/2} \right)^{1/2}.
\]
\end{Proposition}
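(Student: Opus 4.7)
The plan is to combine Proposition \ref{opa_roots_prop} with the Pythagorean-based bound from Lemma \ref{0_opa}, specialized to the case $p \geq 2$. Proposition \ref{opa_roots_prop} already gives
\[
|w_1\cdots w_k| \geq \frac{(1 - \|1 - q_{n,p}[f]f\|_p^p)^{1/2}}{|J(0)|},
\]
so the whole task reduces to finding a sufficiently sharp upper bound on $\|1 - q_{n,p}[f]f\|_p^p$ in terms of $f(0)$ and $\|f - f(0)\|_p$.

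First I would observe that since $q_{0,p}[f] \in \mathcal{P}_n$, the minimality of $q_{n,p}[f]$ gives $\|1 - q_{n,p}[f]f\|_p \leq \|1 - q_{0,p}[f]f\|_p$, so it is enough to estimate the constant-OPA error. Next I would apply Lemma \ref{0_opa}, being careful to use the upper Pythagorean inequality appropriate to $p \geq 2$, namely inequality \eqref{upper2}; this forces the Pythagorean exponent to be $r = 2$ with constant $K = p-1$. With these parameters, the quantity in Lemma \ref{0_opa} becomes
\[
A \;=\; (p-1)\bigg\|\frac{f - f(0)}{z f(0)}\bigg\|_p^{2} \;=\; \frac{(p-1)\|f - f(0)\|_p^{2}}{|f(0)|^{2}},
\]
since $|z|=1$ on $\T$. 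Plugging $r = 2$ into the conclusion of Lemma \ref{0_opa} collapses the bound nicely:
\[
\|1 - q_{0,p}[f]f\|_p^{2} \;\leq\; \frac{A^{2}}{(1+A)^{2}} + \frac{A}{(1+A)^{2}} \;=\; \frac{A}{1+A}.
\]

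Raising to the power $p/2$ and simplifying $A/(1+A)$ gives
\[
\|1 - q_{n,p}[f]f\|_p^{p} \;\leq\; \left(\frac{A}{1+A}\right)^{p/2} \;=\; \left(\frac{(p-1)\|f - f(0)\|_p^{2}}{|f(0)|^{2} + (p-1)\|f - f(0)\|_p^{2}}\right)^{p/2}.
\]
Substituting this into the inequality from Proposition \ref{opa_roots_prop} yields exactly the claimed lower bound on $|w_1\cdots w_k|$.

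There is really no serious obstacle here: the main conceptual step is recognizing that the Pythagorean parameters change with $p$, and that the case $p \geq 2$ is precisely the one for which $r = 2$, $K = p-1$ is the right choice in Lemma \ref{0_opa} (in contrast to the $1 < p < 2$ case, which uses $r = p$, $K = 1/(2^{p-1}-1)$ and leads to the different bound proved in the previous proposition). The only minor calculation is the algebraic simplification $A^{r/(r-1)} + A = A(A+1)$ when $r = 2$, which collapses the two-term bound to the clean ratio $A/(1+A)$.
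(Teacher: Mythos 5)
Your proof is correct and follows essentially the same route as the paper: both reduce to the chain $1 - |w_1\cdots w_k J(0)|^2 \leq \|1 - q_{n,p}[f]f\|_p^p \leq \|1 - q_{0,p}[f]f\|_p^p$ and then invoke Lemma \ref{0_opa} with the upper Pythagorean parameters $r=2$, $K=p-1$, simplifying the two-term bound to $A/(1+A)$. Your write-up is, if anything, slightly more explicit about the algebraic collapse than the paper's ``straightforward computation now yields the desired result.''
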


\begin{proof}
First note that 
\[
\|1 - q_{0,p}[f]f\|_p^p \geq \|1 - q_{n,p}[f]f\|_p^p \geq 1 - |w_1\cdots w_kJ(0)|^2,
\]
and apply the upper bound for $\|1 - q_{0,p}[f]f\|_p^p$ from Lemma \ref{0_opa}. For $2 < p < \infty$, the parameters are $r = 2$ and $K = p - 1$, and the bound simplifies to
\[
\|1 - q_{0,p}[f]f\|_p^{2} \leq \frac{A}{A + 1}, \quad \text{where } A = \frac{(p - 1)\|f - f(0)\|_p^2}{|f(0)|^2}.
\]
Straightforward computation now yields the desired result. 
\end{proof}

\section{Concluding Remarks and Open Questions}\label{concl}
We end with a few comments and questions. 

\begin{Remark}[The cases $0< p \le 1$ and $p=\infty$]
As is well known, the solution of extremal problems similar to problems considered in this paper need not be unique in $H^p$ for $0 < p \leq 1$ and $p = \infty$.  Nevertheless, it would be interesting to see what one can say about metric projections and OPAs in those spaces. Birkhoff-James orthogonality does not hold for $0 < p < 1,$ since the dual space is trivial. Along the same lines, it is natural to ask if the tools developed here can give a new unified proof of Beurling's Theorem for all $0 < p < \infty$, versus diverting the proof to the case $p = 2$ via factorization of $H^p$ functions as is done in \cite[p. 98]{MR628971}.
We also note that for $p = \infty$, as soon as any OPA vanishes in $\overline{\D}$, it must vanish identically by the Maximum Principle.
\end{Remark}

\begin{Remark}[Outer factor in the extremal function]
When $p = 2$, the projection of $1$ onto a nontrivial closed $z$-invariant subspace gives a constant times an inner function, while for $1 < p < \infty$ ($p \neq 2$), we get a non-constant outer function times an inner function. It would be interesting to gain a better understanding of why a non-constant outer function appears in the projection in the Banach space setting, versus the Hilbert space setting. 
\end{Remark}

\begin{Remark}[Universality]
The continuity of metric projections implies that, given any compact set $K$, the mapping $\mathcal{Q}_n \colon  H^p\setminus\{0\} \to C(K)$, which takes a function $f$ to its OPA $q_{n,p}[f]$, is continuous. This was a key ingredient for establishing the existence of functions in $H^2$ whose optimal polynomial approximants have \textit{universal} approximation properties on subsets of the unit circle with zero arclength measure (see \cite[Proposition~2.1]{BIMS}). The second main ingredient in proving the universality result is the following formula, which describes the explicit dependence of an OPA on its outer part (see \cite[Proposition~2.2]{BIMS}): 
If $g$ is an inner function in $H^2$ and $f\in H^2\setminus\{0\}$, then $q_{n,2}[fg]=\overline{g(0)} q_{n,2}[f]$. Thus, it is natural to examine the analogous result for the $H^p$ setting, which could provide results about universality in the $H^p$ setting.
\end{Remark}

\begin{Remark}[Dynamics of zeros]
It would also be interesting to study the interplay between the dynamics of the zeros of $H^p$ functions and the corresponding extremal problem \eqref{ext_prob}.  Moving a zero of a Blaschke product $J$ closer to the unit circle gives rise to a smaller distance between $1$ and the corresponding $z$-invariant subspace generated by $J$ (as seen from the distance formula in Theorem \ref{fstarnozero_0}).  For example, if  $J$ has zeros $a_1, a_2,\ldots$, then $\inf_{h \in H^p}\|1-Jh\|_p^p = 1 - |a_1 a_2 \cdots |^2$. So, if a zero $a_1$ wanders closer to the unit circle, the corresponding distance decreases. Along the same lines, it would be interesting to study the zeros of the first degree OPA as $p$ varies, for a fixed $f$. It is known, for instance, that if $f$ is a bounded analytic function, then the linear OPA for $1/f$ varies uniformly with $p$ \cite[Lemma 3.1.1]{ChengFelder};  moreover, if $f$ is inner, then the root of this linear OPA is bounded from the origin by an amount depending only on $p$ \cite[Theorem 5.1.3]{CCF}. Recall also that in $H^2$ and in certain other Dirichlet-type Hilbert spaces, as long as $1/f$ is not analytic in the closed disk, although the zeros of the OPAs stay outside the closed unit disk, a 
Jentzsch-type phenomenon occurs \cite{BKLSS}: that is, every point on the unit circle is a limit of the zeros of the OPAs of $1/f$. Does such a Jentzsch-type phenomenon occur in $H^p$ for $p \neq 2$?
\end{Remark}

\begin{Remark}[Bergman spaces]
Finally, the notion of $z$-invariant subspaces and corresponding extremal problems as considered in Sections 3 and 4 can easily be reformulated for Bergman spaces of analytic functions.  Recall that for $ 0 < p < \infty,$ the Bergman space $A^p$ is defined as
\begin{equation*}
    A^p := \left\{ f\in \operatorname{Hol}(\D) : 
    \|f\|_{A^p}^p := \int_{\D} |f(z)|^p\, dA(z) < \infty \right\},
\end{equation*}
where $dA$ is normalized area measure on the unit disk. For $1 \leq p < \infty,$ $A^p$ is a Banach space. The lattice of $z$-invariant subspaces is much more complicated in $A^p$ than in $H^p$, and elements of this lattice are not necessarily singly-generated (see \cite{MR808268, MR1241125}). Moreover, there is no factorization of $A^p$ functions as transparent as the factorization of functions in Hardy spaces. However, the notion of an inner function has been meaningfully extended to Bergman spaces: a function $G \in A^p$ is called $A^p$-inner if for $n = 0, 1, 2, \ldots$, we have 
\begin{equation*}
    \int_{\D} \left(|G(z)|^{p} -1 \right) z^ n \, dA(z) = 0.
\end{equation*}
Thus, if $G$ is an $A^p$-inner function and we consider the $z$-invariant subspace $M = [G]_{A^p}$, one can investigate metric projections of $1$ onto $M$ in a way that is analogous to investigating metric projections of $1$ onto $z$-invariant subspaces in $H^p$. Following results in \cite{MR1133317} for $p = 2$, the orthogonal projection of $1$ onto $[G]_{A^2}$ is given by $\overline{G(0)}\, G$, as is true within the analogous setup in $H^2$. Therefore, it is natural to guess that for $1 < p < \infty$ ($p \neq 2$), the metric projection of $1$ onto $[G]_{A^p}$ gives a cyclic vector times $G$, in analogue with the conclusion of Theorem \ref{s_and_s} (here, a function $f\in A^p$ being cyclic means that $[f]_{A^p} = A^p$). 
\end{Remark}

\subsection*{Acknowledgments} This project was partially conducted during visits by C.\ B\'en\'eteau and D.\ Khavinson to University College Dublin and Indiana University. Travel to UCD was supported by the Distinguished Visitor Funding Scheme and M. Manolaki's Ad Astra Starting Grant. C.\ B\'en\'eteau, C.\, Felder, and D.\ Khavinson would like to thank the Indiana University Department of Mathematics for hospitality during a subsequent visit. D.\ Khavinson acknowledges support from Simons Foundation grant 513381 and K.\ Maronikolakis acknowledges support from the Irish Research Council.

\bibliographystyle{plain}
\bibliography{HpBibliography.bib}

\end{document}